\newcommand{\fG}{\mathfrak{G}}
\newcommand{\fM}{\mathfrak{M}}
\newcommand{\fX}{\mathfrak{X}}
\newcommand{\cM}{\mathcal{M}}
\newcommand{\cA}{\mathcal{A}}
\newcommand{\cG}{\mathcal{G}}
\newcommand{\cP}{\mathcal{P}}
\newcommand{\cF}{\mathcal{F}}
\newcommand{\cV}{\mathcal{V}}
\newcommand{\mZ}{\mathbb{Z}}
\newcommand{\mR}{\mathbb{R}}
\newcommand{\mN}{\mathbb{N}}
\newcommand{\mF}{\mathbb{F}}
\DeclareMathOperator{\rg}{range}
\DeclareMathOperator{\dom}{dom}
\DeclareMathOperator{\card}{card}
\newcommand{\st}{\mid}
\newcommand{\bigst}{\bigm|}
\theoremstyle{plain}
  \newtheorem{lemma}{Lemma}[section]
  \newtheorem{theorem}[lemma]{Theorem}
  \newtheorem{proposition}[lemma]{Proposition}
  \newtheorem{definition}[lemma]{Definition}
\theoremstyle{definition}
  \newtheorem{remark}[lemma]{Remark}
  \newtheorem{example}[lemma]{Example}
\title{Suspensions of Bernoulli shifts}
\thanks{2010 {\it Mathematics Subject Classification}. Primary 37C85. Secondary 37B10, 57R30}
\thanks{Version date: July 17, 2013}
\author{\'Alvaro Lozano-Rojo}
\address{\'Alvaro Lozano-Rojo, Centro Universitario de la Defensa, Academia General Militar, Carretera Huesca s/n, E-50090 Zaragoza, Spain - and - IUMA, Universidad de Zaragoza.}
\email{alvarolozano@unizar.es}
\author{Olga Lukina}
\address{Olga Lukina, Department of Mathematics, Statistics and Computer Science, University of Illinois at Chicago, 322 SEO (M/C 249), 851 S. Morgan St., Chicago, IL 60607, USA}
\email{lukina@uic.edu}
\begin{document}

\maketitle

\begin{abstract}
  We show that for a given finitely generated group, its Bernoulli shift space 
  can be equivariantly embedded as a subset of a space of pointed trees with 
  Gromov-Hausdorff metric and natural partial action of a free group. Since the 
  latter can be realised as a transverse space of a foliated space with leaves 
  Riemannian manifolds, this embedding allows us to obtain a suspension of such 
  Bernoulli shift. By a similar argument we show that the space of pointed trees is 
  universal for compactly generated expansive pseudogroups of transformations.
\end{abstract}

\section{Introduction}
\label{intro}
% (fold)
Let $G$ be a semigroup with the identity element, that is, $G$ is a set equipped 
with an associative binary operation. Let $S$ be a finite set, and consider the 
set of all maps $\Sigma(G, S) = \{ \sigma:G\to S\}$. Give the set $\Sigma(G,S)$ 
the product topology in the standard way, and define an action of $G$ on 
$\Sigma(G, S)$ by
\[
 \sigma(g') \cdot g = \sigma(gg').
\]
Thus one obtains a dynamical system $(\Sigma(G,S), G)$, called the 
\emph{Bernoulli shift} \cite{Cnrt}. The most well-known cases are $G = \mZ$, 
$G = \mN$ and $G = \mZ^n$ (see \cite{LindSchmidt} for a survey). In the cases 
$\mZ$ and $\mN$ one can study shifts using graphs generated by finite automata. 
This article proposes a way to represent the dynamics of a shift system using 
graphs in the case when $G$ is any finitely generated group. 

We will do that by constructing an equivariant embedding of $(\Sigma(G,S), G)$ 
into a metric space of pointed trees with dynamics given by a pseudogroup 
action. This space is obtained as follows.

For a symmetric set $G^1$ of generators of $G$, consider a graph $\cG$ with 
unoriented labeled edges, which is essentially the Cayley graph of $G$ with 
respect to $G^1$. We then denote by $X$ the set of all non-compact subgraphs of 
$\cG$ with only trivial loops, that is, every such subgraph $T \subset X$ is a 
tree. We also require that the identity element $e$ is a vertex of $T$, and 
therefore $(T,e)$ is a pointed metric space with the standard length metric. 
Distances between elements of $X$ are measured with the help of the 
Gromov-Hausdorff metric $d_{GH}$, and it turns out that $(X,d_{GH})$ is a 
compact totally disconnected space 
\cite{Ghys1999,Blanc2001,LozanoSp,Lukina2011}. There is a natural partial action 
of the free group $F_n$ on $X$, where $n$ is the half cardinality of $G^1$, and 
this action gives rise to the action of a pseudogroup $\fG$ on $X$. One can 
suspend the action of $\fG$ on $X$ to obtain a smooth foliated space $\fM_G$ 
with $2$-dimensional leaves \cite{Ghys1999,Blanc2001,LozanoSp,Lukina2011}. By 
this construction, for $T \in X$ the corresponding leaf $L_T \subset \fM_G$ can 
be thought of as the two-dimensional boundary of the thickening of a graph of 
the orbit of $T$ under the action of $\fG$. Following \cite{Lukina2011}, we call 
the closure $\cM = \overline{L}$ of a leaf $L \subset \fM_G$ a \emph{graph 
matchbox manifold}. The term `matchbox manifold' refers to the fact that 
$(X,d_{GH})$ is a totally disconnected space, and stems from the study of flows 
on $1$-dimensional continua \cite{AO1995,AM1988}. In the latter case a foliation 
chart is homeomorphic to the product of an open interval and a zero-dimensional 
space, and so can be thought of as a box of matches, where each match 
corresponds to a path-connected component of the chart.

The construction of a foliated space transversely modeled on the space of 
pointed trees with pseudogroup action was introduced by R.~Kenyon and used by 
\'E.~Ghys \cite{Ghys1999} in the case $G = \mZ^2$ to obtain an example of a 
space foliated by Riemannian manifolds such that each leaf is dense and there are 
leaves with different conformal types. Blanc \cite{Blanc2001} considered a 
similar construction for $G = F_n$, a free group on $n$ generators, and the 
first author \cite{LozanoSp}, also with F.~Alcalde Cuesta and M.~Macho Stadler 
\cite{ALM2008}, studied this construction for the case of an arbitrary finitely 
generated group. In the case $G = \mZ^2$ the first author \cite{Lozano2009} 
found examples with interesting ergodic properties. Blanc \cite{Blanc2001} found 
an example of a graph matchbox manifold with specific asymptotic properties of 
leaves. 

A systematic study of the dynamical and topological properties of $\fM_G$ was 
done by the second author in \cite{Lukina2011}. In particular, in 
\cite[Theorem~1.3]{Lukina2011} the second author studied a partial order on 
$\fM_G$ given by inclusions, which is equivalent to the study of the orbit 
structure of the pseudogroup dynamical system $(X,\fG)$. Using the notion of a 
level of a leaf, initially introduced for codimension $1$ foliations by 
J.~Cantwell and L.~Conlon \cite{CC1} (see also 
\cite{CC3,Hector2,Hector3,Nish1977,Nish1979}) she constructed hierarchies of 
graph matchbox manifolds at infinite levels. The results of \cite{Lukina2011} 
also show that the orbit structure of $(X,\fG)$ is reminiscent of the orbit 
structure of Bernoulli shifts, for example there is a meager subset of points 
with finite orbits, and a residual subset of points with dense orbits. 
Therefore, it is natural to seek the relation between the two. The second author 
has been asked this question by M.~Barge, S.~Hurder and A.~Clark, when 
presenting her work at conferences. Our main theorem gives the answer to this 
question.

Write the symmetric generating set $G^1$ as $G^{1+}\sqcup G^{1-}$ where $G^{1+}$ 
is the \emph{positive} set of generators and $G^{1-} = (G^{1+})^{-1}$ is the 
\emph{negative} set.

\begin{theorem}
 \label{thm-main}
 Let $G$ be a finitely generated group, and $(\Sigma(G, S),G)$ be the Bernoulli
 shift. Given a set of generators $G^1$ of $G$, and an injective map
 $\alpha: G^{1+} \times S \to F_n^{1+}$, there exists an embedding
 \[
   \Phi_\alpha : \Sigma(G, S) \to X_n,
 \]
 which is an orbit equivalence. Moreover, for any $\sigma\in\Sigma(G, S)$ and
 $g\in G^{1+}$
 \[
   \Phi_\alpha(\sigma \cdot g)
       = \Phi_\alpha(\sigma) \cdot \alpha(g, \sigma(e)).
 \]
 and if $g^{-1} \in G^{1+}$ then
 \[
   \Phi_\alpha(\sigma \cdot g)
     = \Phi_\alpha(\sigma) \cdot
             \bigl[\alpha(g^{-1}, \sigma(g)) \bigr]^{-1}.
 \]
\end{theorem}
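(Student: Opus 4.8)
The plan is to define $\Phi_\alpha$ by encoding a configuration $\sigma$ as an embedded copy of the universal cover of the Cayley graph of $G$ inside the Cayley graph $\cG$ of $F_n$, where the labels of the embedding record the values of $\sigma$. Concretely, to an edge-path $\omega = s_1\cdots s_k$ (with $s_i\in G^1$) issuing from $e$ in the Cayley graph of $G$ I assign the element $\lambda_\sigma(\omega)=c_1\cdots c_k\in F_n$, where $c_i=\alpha(s_i,\sigma(s_1\cdots s_{i-1}))$ when $s_i\in G^{1+}$ and $c_i=\bigl[\alpha(s_i^{-1},\sigma(s_1\cdots s_i))\bigr]^{-1}$ when $s_i\in G^{1-}$; these are exactly the one-step labels forced by the two displayed formulas. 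I then let $T_\sigma$ be the subgraph of $\cG$ whose vertices are the elements $\lambda_\sigma(\omega)$ and whose edges are those of $\cG$ joining consecutive such elements, and set $\Phi_\alpha(\sigma)=(T_\sigma,e)$.

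The crux, and the step I expect to be the main obstacle, is to show that $T_\sigma$ is a well-defined pointed subtree in $X_n$ despite the relations in $G$. The key is a backtracking lemma: using that $\alpha$ is injective and takes values in $F_n^{1+}$, one shows that two consecutive labels cancel in $F_n$ (that is, $c_{i+1}=c_i^{-1}$) if and only if $s_{i+1}=s_i^{-1}$, i.e. $\omega$ backtracks in $G$. Consequently a backtrack-free path in the Cayley graph of $G$ maps to a reduced word in $F_n$, and reading the unique $\cG$-geodesic from $e$ to a given vertex back through $\alpha$ recovers a unique backtrack-free $G$-path. This yields a bijection between the vertices of $T_\sigma$ and the backtrack-free paths from $e$, so $T_\sigma$ is (isomorphic to) the universal cover of the Cayley graph of $G$, embedded in the tree $\cG$; in particular $T_\sigma$ is connected, has only trivial loops, is non-compact, and carries a well-defined value of $\sigma$ at each vertex, so $\Phi_\alpha(\sigma)\in X_n$.

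With the map in hand, the two equivariance identities are the one-step instances of this labelling and follow directly from the definition together with the partial action $T\cdot a=a^{-1}T$. For injectivity I would recover $\sigma$ from $T_\sigma$ vertex by vertex: at the vertex carrying the translate $\sigma\cdot g$, the labels of its outgoing $F_n^{1+}$-edges form the set $\{\alpha(s,\sigma(g)) : s\in G^{1+}\}$, and injectivity of $\alpha$ makes this set determine $\sigma(g)$; as $g$ ranges over $G$ this reconstructs $\sigma$. Continuity is immediate since $T_\sigma\cap B_{\cG}(e,R)$ depends only on the restriction of $\sigma$ to a bounded ball in $G$, so by compactness of $\Sigma(G,S)$ the map $\Phi_\alpha$ is a homeomorphism onto its image.

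Finally, for the orbit equivalence I would note that the $\fG$-orbit of $T_\sigma$ in $X_n$ consists exactly of the rebasings $\{w^{-1}T_\sigma : w\in V(T_\sigma)\}$, since $T_\sigma$ is a connected tree and the pseudogroup is generated by the partial translations along its edges. Iterating the equivariance identities along the path defining $w$ shows $w^{-1}T_\sigma=T_{\sigma\cdot g}$, where $g$ is the endpoint of the corresponding backtrack-free $G$-path; hence the $\fG$-orbit of $\Phi_\alpha(\sigma)$ is precisely the image under $\Phi_\alpha$ of the $G$-orbit of $\sigma$. Combined with injectivity, this gives that $\Phi_\alpha$ carries the $G$-orbit relation bijectively onto the restriction of the $\fG$-orbit relation to the image, i.e. $\Phi_\alpha$ is an orbit equivalence.
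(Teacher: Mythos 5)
Your proposal is correct, and the resulting map is the same as the paper's, but you organize the argument differently. The paper first reduces to free groups: choosing a presentation $f\colon F_M\to F_M/R\cong G$, it shows the pullback $\hat f\colon\Sigma(G,S)\to\Sigma(F_M,S)$, $\hat f(\sigma)(w)=\sigma(f(w))$, is an equivariant embedding, and then runs your inductive labelling construction only for the free group $F_M$ (Proposition~\ref{prop-freegroup}), where every word is already reduced and no well-definedness issue can arise. You instead build $T_\sigma$ directly over $G$ as the $\sigma$-twisted universal cover of the Cayley graph, and the work that the paper's reduction makes invisible reappears in your backtracking lemma: consecutive labels cancel iff the $G$-path backtracks, because $\alpha$ is injective and $F_n^{1+}\cap F_n^{1-}=\emptyset$ (when $s_{i+1}=s_i^{-1}$ the two second coordinates automatically agree since the prefix products in $G$ coincide, so cancellation does occur; conversely injectivity of $\alpha$ forces $s_{i+1}=s_i^{-1}$). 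That lemma is correct and is exactly what is needed; the two routes produce the same tree, since backtrack-free paths from $e$ in the Cayley graph of $G$ are in bijection with reduced words in $F_M$. What your route buys is that the geometric picture (universal cover, reconstruction of $\sigma(g)$ from the $M$ positively-oriented edge labels at the corresponding vertex) is explicit, and your injectivity and orbit-equivalence arguments are actually spelled out more fully than in the paper, which only sketches them; what the paper's route buys is modularity, isolating the group relations in the single map $\hat f$. A small point in your favour: your one-step label for a negative generator, $\bigl[\alpha(s^{-1},\sigma(s))\bigr]^{-1}$, agrees with the second displayed formula of Theorem~\ref{thm-main} (second argument $\sigma(g)$), whereas the statement of Proposition~\ref{prop-freegroup} writes $\sigma(e)$ there; the construction in the paper's own proof (equation~\eqref{ec:kappa_j2}) confirms your version.
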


Since $X_n$ can be realised as a transverse space of a foliated space with 
leaves Riemannian manifolds, this embedding allows us to obtain a suspension of 
a Bernoulli shift for arbitrary $G$ and $S$.

Recall \cite{Cnrt} that a dynamical system $(\Omega, G)$, where 
$(\Omega,d_\Omega)$ is a metric space, is $\epsilon$-expansive if for any
$x, y \in \Omega$ there exists $g \in G$ such that
$d_{\Omega}(g(x),g(y)) \geq \epsilon$. Bernoulli shifts provide
examples of $\epsilon$-expansive (for some $\epsilon$) dynamical systems. It is 
known \cite[Proposition 2.6]{Cnrt} that every expansive action of a group $G$ on 
a compact metrizable space $\Omega$ is a quotient of the $G$-action on a closed $G$-invariant subspace 
of $\Sigma(G,S)$ for some finite set $S$, i.e. of a subshift. If $\Omega$ is totally disconnected, then the quotient
map $\pi: \Sigma(G,S) \to \Omega$ can be made a conjugacy \cite[Proposition 2.8]{Cnrt}. This conjugacy is 
not unique, and there is no canonical way to choose the set $S$. 

As a consequence of Theorem~\ref{thm-main} we extend this result to 
pseudogroup dynamical systems $(\Omega, \Gamma)$, where $\Omega$ is a 
$0$-dimensional compact metrizable space and $\Gamma$ admits a \emph{compact 
generating system} \cite{haefliger1985,haefliger2002}. The latter property means 
that $\Omega$ contains a relatively compact open set $Y$ meeting all orbits of 
$\Gamma$ and the reduced pseudogroup $\Gamma|_Y$ is generated by a finite set 
$\Lambda_Y$ of elements of $\Gamma$ such that each $\gamma\in\Lambda_Y$ is the 
restriction of an element $\bar\gamma\in\Gamma$ and such that the closure of 
$\dom\gamma$ is contained in $\dom\bar\gamma$. Recall \cite{Hurder2010} that a 
pseudogroup $\Gamma$ on $\Omega$ is $\epsilon$\emph{-expansive} if for all 
$w\ne w'\in\Omega$ with $d(w,w')<\epsilon$ there exists $\gamma\in\Gamma$ with 
$w,w'\in\dom h$ such that $d(\gamma(w),\gamma(w'))\geq\epsilon$. 

\begin{theorem}
  \label{cor-pseudgroupuniv}
  Let $\Omega$ be a $0$-dimensional compact metrizable space, and $\Gamma$ be an 
  $\epsilon$-expansive pseudogroup of transformations on $\Omega$ which admits a 
  compact generating system. Then there is an embedding $\Omega\to X_n$ 
  equivariant with respect to the actions of $\Gamma$ and $\fG_n$, for a large 
  enough $n$.  
\end{theorem}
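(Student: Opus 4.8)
The plan is to reduce the pseudogroup system $(\Omega,\Gamma)$ to the free-group situation, where the mechanism of Theorem~\ref{thm-main} applies, and to produce the embedding directly into $X_n$. The compact generating system supplies a relatively compact open set $Y\subset\Omega$ meeting every $\Gamma$-orbit, together with a finite symmetric generating set $\Lambda_Y$ of the reduced pseudogroup $\Gamma|_Y$; write $\{\gamma_1,\dots,\gamma_k\}$ for its positive half. Since each $\gamma_i$ is the restriction of some $\bar\gamma_i\in\Gamma$ with $\overline{\dom\gamma_i}\subset\dom\bar\gamma_i$, and $\Omega$ is $0$-dimensional, the domains and ranges of the $\gamma_i$, and of all their reduced compositions, are clopen. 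Thus $\gamma_1,\dots,\gamma_k$ determine a partial action of the free group $F_k$ on $Y$, in which a reduced word acts on the clopen set where its successive one-letter compositions are all defined.

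First I would record the combinatorial shape of a single orbit. For $y\in Y$, let $T_y\subset F_k$ be the set of reduced words $g$ for which $g\cdot y$ is defined; this set is prefix-closed, so joining $g$ to $g\gamma_i^{\pm1}$ whenever both lie in $T_y$ makes $(T_y,e)$ a pointed subtree of the Cayley graph of $F_k$. The crucial difference from the group case is that here the orbit data is genuinely \emph{partial}: the itinerary of $y$ is defined only along $T_y$, not on all of $F_k$, which is exactly why the space of pointed \emph{subtrees} $X_n$, rather than a full shift $\Sigma(F_k,S)$, is the natural target.

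To separate points I would invoke expansiveness. Since $(\Omega,\Gamma)$ is $\epsilon$-expansive and $\Omega$ is compact and $0$-dimensional, choose a finite clopen partition $P=\{P_1,\dots,P_m\}$ of $\Omega$ of mesh less than $\epsilon$. Expansiveness guarantees that any two distinct points of $Y$ are sent into different blocks of $P$ by some word $g$ defined at both, so the itinerary that records, for each $g\in T_y$, the block of $P$ containing $g\cdot y$ is injective; it is continuous because both $P$ and the domains are clopen. This is the pseudogroup analogue of Coornaert's coding in Propositions 2.6 and 2.8, now yielding a \emph{labeling} of the partial tree $T_y$. Absorbing the label into the generator exactly as the map $\alpha$ does in Theorem~\ref{thm-main} --- replacing the edge for $\gamma_i$ at a vertex lying in block $P_j$ by the $F_n$-generator $\alpha(\gamma_i,P_j)$, with $n=k\,m$ --- converts the labeled tree into a pointed subtree of the Cayley graph of $F_n$, hence a point of $X_n$. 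Since $Y$ meets every $\Gamma$-orbit, the same recipe defines $\Phi:\Omega\to X_n$ on all of $\Omega$ by first translating a point into $Y$, and the same computation that proves the two displayed formulas in Theorem~\ref{thm-main} shows that $\Phi$ intertwines the partial $\Gamma$-action with $\fG_n$.

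The hard part will be the adaptation of Coornaert's symbolic coding to the partial setting. One must verify that $\epsilon$-expansiveness of the pseudogroup still forces the labeled-itinerary map to be injective even though each generator is defined only on a proper clopen subset, and that the closure condition $\overline{\dom\gamma_i}\subset\dom\bar\gamma_i$ built into the compact generating system is precisely what guarantees both the continuity of $\Phi$ and the closedness of its image in $X_n$. Once injectivity and continuity are in hand, equivariance reduces to checking that applying $\gamma_i$ shifts the basepoint of $T_y$ along the edge prescribed by $\alpha$, which is immediate from the construction.
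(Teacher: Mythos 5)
Your strategy coincides with the paper's: use the compact generating system to induce a partial action of a free group on the transversal, record the itinerary of each point through a finite clopen partition of mesh less than $\epsilon$ along the prefix-closed set of words where the partial action is defined, and absorb the partition label into the $F_n$-generator via $\alpha$ exactly as in Theorem~\ref{thm-main}. The only structural difference is that the paper interposes a symbolic space: it adjoins a dead symbol $s_\emptyset$ to $S$, realizes $\Omega$ as the subset $\Psi\subset\Sigma(\mF,S\cup\{s_\emptyset\})$ of itineraries $\sigma$ with $I(\sigma)\ne\emptyset$, proves $\sigma\mapsto I(\sigma)$ is a homeomorphism, and only then runs the tree construction on $\Psi$, suppressing edges at the symbol $s_\emptyset$; you go directly from $\Omega$ to $X_n$. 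Your set $T_y$ of words defined at $y$ is exactly the paper's $\widetilde\mF^{\leq j}$ in the limit, so this difference is cosmetic.

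There is, however, one genuine gap. Your construction produces a pointed tree only for $y\in Y$, and you extend to all of $\Omega$ ``by first translating a point into $Y$.'' This is not well-defined: for $x\in\Omega\smallsetminus Y$ there are in general many $\gamma\in\Gamma$ with $\gamma(x)\in Y$, and the corresponding candidates for $\Phi(x)$ --- namely $\Phi(\gamma(x))$ moved back by the inverse of the $F_n$-word coding $\gamma$ --- are distinct points of $X_n$ lying in the same $\fG_n$-orbit. Making an arbitrary choice for each $x$ threatens continuity, and also injectivity (a point of $Y$ and a point of $\Omega\smallsetminus Y$ on the same orbit could be sent to the same tree). The paper avoids this entirely: by the method of Lemma~\ref{lemma:CGS<->Full} it upgrades the generating system on $Y$ to a finite symmetric generating system $\Lambda_\Omega$ with compact clopen domains defined on \emph{all} of $\Omega$, so that every point of $\Omega$ is the root of its own itinerary tree and no translation step is needed. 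You should do the same, or else fix a clopen partition of $\Omega\smallsetminus Y$ with one chosen translating generator per piece, adjoin these to the generating set, and check coherence. The items you explicitly defer --- injectivity from $\epsilon$-expansiveness and continuity from clopenness of the domains --- are carried out in the paper essentially as you sketch them, so apart from the extension issue the proposal is sound.
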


In other words, $(\Omega, \Gamma)$ is conjugate to the $\fG_n$-action on a closed $\fG_n$-invariant subset of $X_n$.

The rest of the paper is organized as follows. In Section~\ref{sec-prelim} we
recall definitions and basic properties of Bernoulli shifts, and sketch the
construction of a foliated space of graph matchbox manifolds. In
Section~\ref{sec-prooftheor} we present a proof of Theorem~\ref{thm-main}. 
Section~\ref{sec-univ} gives a proof of Theorem ~\ref{cor-pseudgroupuniv}, and 
in Section \ref{semigroup} we show that our method is applicable to the 
semigroup $G = \mN_0$, but not to general 
semigroups.
% S Introduction (end)

\section{Preliminaries}
\label{sec-prelim}
% (fold)

\subsection{Bernoulli shifts}
\label{Bern}
% (fold)

Let $\Gamma$ be a semigroup with the identity element, that is, $\Gamma$ is a 
set equipped with an associative binary operation, and there is $e \in \Gamma$ 
such that for every $\gamma \in \Gamma$, $\gamma e = e \gamma = \gamma$. Let $S$ 
be a finite set, and consider the set $\Sigma = \Sigma(\Gamma,S)$ of maps 
$\sigma: \Gamma \to S$, with the right action of $\Gamma$ given by
\begin{equation}
  \label{eq-action}
  \sigma(\gamma') \cdot \gamma = \sigma (\gamma \gamma').
\end{equation}
We give $S$ discrete topology, and $\Sigma$ the product topology, that is, given 
a finite set $F \subset\Gamma$, and an element $\sigma'\in\Sigma$, a basic open 
set is given by
\[
  C_{F,\sigma'}
    = \bigl\{\, 
          \sigma \in \Sigma 
          \bigst
          \sigma(\gamma) = \sigma' (\gamma) ~\text{for all}~ \gamma \in F 
      \,\bigr\}.
\]
Then $\Sigma$ with this topology is metrizable, compact and totally 
disconnected and,
if $\card S$ is strictly greater than $1$, perfect (see, for 
instance, \cite{Cnrt}).

\begin{definition}[\cite{Cnrt}]
  The space $\Sigma$ equipped with the action of $\Gamma$ given by  
  \eqref{eq-action} is called a \emph{Bernoulli shift}. A closed 
  $\Gamma$-invariant subspace of $\Sigma$ is called a \emph{subshift}.
\end{definition}

For basic properties of Bernoulli shifts we refer the reader to \cite{Cnrt}, 
stating here only that Bernoulli shifts are expansive, i.e. the following 
definition is satisfied.

\begin{definition}[\cite{Cnrt}]
  \label{defn-expansivedyn} 
  A dynamical system $(\Omega,\Gamma)$, where $(\Omega,d_\Omega)$ is a metric 
  space, is $\epsilon$-expansive if for any $x, y \in \Omega$ there exists 
  $\gamma \in \Gamma$ such that $d_{\Omega}(\gamma(x),\gamma(y)) \geq \epsilon$.
\end{definition}

\begin{example}
  Let $S = \{0,\ldots,N-1\}$, and let $\Gamma = \mZ$ (resp. $\Gamma = \mN_0$). 
  In this case $\Sigma$ can be thought of as a set of $2$-ended (resp. 
  $1$-ended) sequences, and the action of $\Gamma$ is given by
  \[
    \sigma \cdot n = (\ldots \sigma (-1) .\sigma(0) \sigma(1) \ldots) \cdot n 
                   = (\ldots \sigma (n-1) .\sigma(n) \sigma(n+1) \ldots)
  \]
  if $\Gamma = \mZ$, and if $\Gamma = \mN$, it is given by
  \[
    \sigma \cdot n = (\sigma(0) \sigma(1) \ldots) \cdot n 
                   = (\sigma(n) \sigma(n+1) \ldots).
  \]
  A metric $d_\Sigma$ on $\Sigma$ compatible with the topology is given, for 
  example, by
  \[
    d_{\Sigma}(\sigma,\sigma') =
      \sum_{i=-\infty}^\infty
        \frac{1}{2^{\lvert i\rvert}} \lvert\sigma(i) - \sigma'(i) \rvert,
  \]
  if $\Gamma = \mZ$, and similarly in the case $\Gamma = \mN_0$, with summation 
  starting from $i = 0$. Then for $\epsilon < 1/2$ and two distinct elements 
  $\sigma,\sigma'$ there exists $\lvert n\rvert >0$ such that 
  $\sigma (n) \ne \sigma'(n)$, and $\tilde{\sigma}, \tilde{\sigma}'$ such that 
  $\tilde{\sigma} = \sigma - n$, and $\tilde{\sigma}' = \sigma' - n$, that is, 
  $\tilde{\sigma}(0) = \sigma(n) \ne \sigma'(n) = \tilde{\sigma}'(n)$, which 
  shows that $d_\Sigma(\tilde{\sigma},\tilde{\sigma}') > \epsilon$, and 
  $(\Sigma,\Gamma)$ is $\epsilon$-expansive. 
\end{example}  

% sS Bernoulli shifts (end)

\subsection{Graph matchbox manifolds}
\label{subsec-GMM}
% (fold)

We give a brief outline of the construction of Kenyon and Ghys. The content of 
this section can also be found in \cite{Lukina2011,Ghys1999,LozanoSp,ALM2008}. 

\subsubsection{Space of pointed trees with Gromov-Hausdorff metric}
\label{subsec-GH} 
% (fold)

Let $V(T)$ be the set of vertices, and $E(T)$ be the set of edges of a graph 
$T$. An edge $w \in E(T)$ may be given an orientation by specifying its starting 
and its ending vertex, denoted by $s(w)$ and $t(w)$ respectively. If we are not 
interested in the orientation of a directed edge, or if an edge is undirected, 
then we denote the set of its vertices by $V(w)$. 

\begin{figure}

  \includegraphics{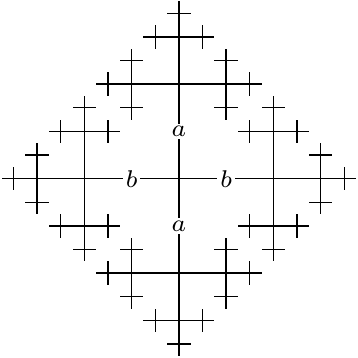}
  \caption{The Cayley graph $\cF_2$ of the free group $F_2=\langle a,b\rangle$.}
  \label{fig:cF_2}
\end{figure}

A subgraph of $T$ is a graph $T'$ with the set of vertices 
$V(T') \subseteq V(T)$ and the set of edges $E(T') \subset E(T)$. Edges of a 
graph $T$ are labeled by symbols contained in a set $\cA$, and sometimes we will 
use the notation $w_{a}$ for an edge $w \in E(T)$ labeled by $a \in \cA$.

Let $G$ be a group with a symmetric set of generators $G^1$, that is, 
$(G^1)^{-1} = G^1$, and $G^1$ does not contain the identity element. We consider 
the action of $G$ on itself on the right. Let $\cG'$ be the Cayley graph of $G$ 
with respect to the set of generators $G^1$; more precisely, set $V(\cG') = G$, 
and to each pair $g_1,g_2 \in V(\cG')$ such that $g_1 h = g_2$ for some 
$h \in G^1$, associate an edge $w_h \in E(\cG')$ with $s(w_h) = g_1$ and 
$t(w_h) = g_2$. Since $G^1$ is a symmetric generating set, the Cayley graph 
$\cG'$ is \emph{undirected}, that is, $g_1,g_2 \in V(\cG')$ are joined by an 
edge $w_h$ if and only if they are joined by an edge $w_{h^{-1}}$. We divide 
$G^1$ into a \emph{positive} part $G^{1+}$ and a \emph{negative} part $G^{1-}$, 
that is, if $h \in G^{1+}$, then $h^{-1}\in G^{1-}$. This division is, of 
course, arbitrary.

We now consider a graph $\cG$ such that $V(\cG') = V(\cG)$, and two vertices 
$g_1$ and $g_2$ are joined by an edge if and only if they are joined by an edge 
in $\cG'$. Thus for a pair of edges $w_h$ and $w_{h^{-1}}$ in $\cG'$ joining the 
same vertices we obtain a single unoriented edge in $\cG$ labeled by 
$h \in G^{1+}$.

We assume that $\cG$ is given the length structure $\ell$ in the usual way, so 
that every edge in $\cG$ has length $1$. Then the complete length metric $D$ on 
$\cG$ is defined by 
\[
  D(x,y) = \inf_\delta \bigl\{\,
              \ell(\delta) \bigst
              \delta:[0,1] \to \cG,\, \delta(0) = x,\, \delta(1) = y
           \,\bigr\},
\]
and $(\cG,D)$ becomes a \emph{length} metric space.

Consider the set $X$ of all subgraphs of $\cG$ which are non-compact, connected 
and simply connected, and which also contain the identity $e \in G$. A tree 
$T\in X$ is a subset of $\cG$, and therefore has an induced length structure 
$\ell$. % and an induced metric $D$. The latter need not be geodesic, since $T$ 
%need not contain shortest paths between its points. However, $D$ defines the 
%length structure $\ell'$ on $T$ which coincides with the restricted length 
%structure $\ell$. 
The length structure $\ell$ gives rise to the length metric $d$ on 
$T$, and the pair $(T,e) \in X$ with metric $d$ becomes a pointed metric space.

The distance between pointed metric spaces $(T,e),(T',e) \in X$ is measured with 
the help of the Gromov-Hausdorff metric $d_{GH}$ \cite{Burago} (here for the 
purpose of this metric we have to restrict to maps preserving labeling of 
edges), and the set $X$ has metric topology induced from this metric. 
%One can show that $X$ is compact in this topology. 
It is also customary to consider the 
\emph{box metric} $d_X$ on $X$, which gives this space the same topology as 
$d_{GH}$, and which is easier to work with. To introduce this metric we first 
need the following definition.

\begin{definition}
  Let $(A,g_1)$ and $(B,g_2)$ be two pointed (not necessarily non-compact) 
  subgraphs of $\cG$ with induced metric $d$. We say that $(A,g_1)$ and 
  $(B,g_2)$ are \emph{isomorphic} if there exists an isometry 
  $(A,g_1) \to (B,g_2)$ which maps $g_1$ into $g_2$ and preserves the labeling 
  of edges.
\end{definition}

Then for a pair of pointed metric spaces $(T,e),(T',e) \in X$ we define
\begin{equation}
  \label{eq-metric1}
  d_{X} (T,T') = e^{-r(T,T')},
\end{equation}
where
\[
  r(T,T') = \max\bigl\{
                r \in \mN \cup \{0\} \bigst 
                \exists\, B_T(e,r) ~\textrm{and}~  B_{T'}(e,r)~ 
                                    \textrm{are isomorphic}
            \bigr\}.
\]
Using the box metric, one can prove \cite{Ghys1999,LozanoSp,Lukina2011} that $(X,d_X)$ is 
totally disconnected, compact, and to 
derive a criterion for $(X,d_X)$ to be a perfect 
space. In particular, if $G = F_n$, a free 
group on $n$ generators with $n>1$, then the space of pointed trees $(X_n,d_X)$ 
is perfect and, therefore, is a Cantor set.

% ssS Space of pointed trees with Gromov-Hausdorff metric (end)

\subsubsection{Pseudogroup action on the space of pointed trees}
\label{subsubsec-actiongraphs}
% (fold)

Recall \cite{haefliger1985} that a \emph{pseudogroup of transformations on a 
space $\Omega$} is a family of homeomorphisms from open sets to open sets in 
$\Omega$ closed under restriction to open sets, composition, inversion and 
extension (gluing finitely many homeomorphisms to form another homeomorphism). 
We now define a pseudogroup action on $X$.

Let $\cP_e(T)$ be the set of paths $\delta:[0,1] \to T$ such that 
$\delta(0) = e$, $\delta(1) = g \in V(T)$ and $\delta$ is the shortest path 
between $e$ and $g$ in $T$. It follows that $\delta$ does not have 
self-intersections, and the image of $\delta$ in $T$ is the union of edges 
\[
  w_{h_{i_1}}\cup w_{h_{i_2}}\cup \cdots \cup w_{h_{i_m}}, ~ 
                    \text{where $h_{i_k} \in G^{1+}$ for $1 \leq k \leq m$.}
\]
Thus $\delta$ defines a word 
$\tilde{h}_{i_1} \tilde{h}_{i_2} \cdots \tilde{h}_{i_m} \in F_n$, which is 
composed as follows. Denote by $g_{k_1}$ and $g_{k_2}$ vertices of $w_{h_{i_k}}$ in 
such a way that $d_T(e,g_{k_1}) < d_T(e,g_{k_2})$, that is, $g_{k_1}$ is `closer' to the 
identity than $g_{k_2}$. Then set
\[
  \begin{cases}
    \tilde{h}_{i_k} = h_{i_k},      &   \text{if $g_{k_2} = g_{k_1} h_{i_k}$},      \\
    \tilde{h}_{i_k} = h_{i_k}^{-1}, &   \text{if $g_{k_2} = g_{k_1} h_{i_k}^{-1}$.}
  \end{cases}
\]
Then $\tilde{h}_1 \tilde{h}_2 \cdots \tilde{h}_n$ gives a representation of 
an element in $G$ in terms of the set of generators $G^1$, and each distinct 
path defines a distinct element. Therefore, the described procedure defines a 
well-defined map
\[
  p : \cP_e(T) \to F_n :
    \delta \mapsto \tilde{h}_{i_1} \tilde{h}_{i_2} \cdots \tilde{h}_{i_m}.
\]

\begin{definition}[\cite{Lukina2011}]
  \label{defn-actionany}
  Let $n < \infty$ be the cardinality of a set $G^{1+}$ of generators of $G$, 
  and $(X,d_X)$ be the corresponding set of pointed trees. Let $g \in F_n$. Then
  \begin{enumerate}

    \item $(T,e) \cdot g$ is defined if and only if there exists a path 
      $\delta \in \cP_e(T)$ such that $p(\delta) = g$.

    \item $(T',e) = (T,e) \cdot g$ if and only if $(T,g)$ and $(T',e)$ are 
      isomorphic. 

  \end{enumerate}
\end{definition}

Now for $r>0$ denote by $D_{X}(T,r)$ an open and closed subset of diameter $e^{-r}$ about $(T,e)$, that is,
\[
  D_{X}(T,r) = \bigl\{\, (T',e) \bigst d_{X}(T,T') \leq e^{-r} \,\bigr\}.
\]
For each $g \in F_n$ let $\ell_g = d_{\cF_n}(e,g)$, and form the union 
of clopen subsets
\begin{equation}
  \label{domgamma}
  D = \bigcup \bigl\{\,
            D_X(T,\ell_g) \bigst
            \text{$T \in X$, $\delta \in \cP_e(T)$ such that $p(\delta) = g$} 
      \,\bigr\}.
\end{equation}
The set $D$ is clopen since $G^1$ is a finite set and so \eqref{domgamma} is a 
finite union. The action of $g$ is defined on $D$, so we can define a map
\[
  \tau_g : D \to X_n : (T,e) \mapsto (T,e)\cdot g
\]
which is a homeomorphism onto its image. Then 
\[
  \fG = \langle\, \tau_g \st g \in F_n \,\rangle
\]
is a pseudogroup of local homeomorphisms with a generating set 
$\fG^1 = \{\,\tau_g\st g\in G^1\,\}$.

% ssS Pseudogroup action on the space of pointed trees (end)

\subsubsection{Suspension of the pseudogroup action on the space of pointed trees}
\label{subsec-folRS}
% (fold)

The pseudogroup dynamical system $(X,\fG)$ can be realised as the holonomy 
system of a smooth foliated space $\fM_G$ as in the following theorem.

\begin{theorem}[\cite{Ghys1999,LozanoSp,Lukina2011}]
  \label{thm-genconstruction}
  Let $G$ be a finitely generated group, and $(X,d_X)$ be the corresponding 
  space of pointed trees with the action of a pseudogroup $\fG$. Then there 
  exists a compact metric space $\fM_G$, and a finite smooth foliated atlas 
  $\cV = \{\phi_i:V_i \to U_i \times \fX_i\}_{1 \leq i \leq \nu}$, where 
  $U_i \subset \mR^2$ is open, with associated holonomy pseudogroup $\Gamma$, 
  such that the following holds. 
  \begin{enumerate}

    \item The leaves of $\fM_G$ are Riemann surfaces.

    \item There is a homeomorphism onto its image
      \[
        t: X \to \bigcup_{1 \leq i \leq \nu} \fX_i,
      \]
      such that $t(X)$ is a complete transversal for the leaves of the 
      foliation, and $\Gamma|_{\tau(X)} = t_*\fG$, where $t_*\fG$ is the 
      pseudogroup induced on $t(X)$ by $\fG$.
\end{enumerate}
\end{theorem}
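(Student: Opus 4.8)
The plan is to realise the pseudogroup dynamical system $(X,\fG)$ as the holonomy of a foliated space by a \emph{partial suspension} that thickens each tree into a surface. I would begin by fixing, once and for all, smooth planar models for the local pieces of a leaf: a \emph{vertex model} $\overline{B} \subset \mR^2$, a closed disk carrying $2n$ disjoint boundary arcs (``gates'') labelled bijectively by $G^1 = G^{1+} \sqcup G^{1-}$, and an \emph{edge model} $\overline{W} \cong [0,1] \times [-\delta,\delta]$, a band with two ends destined to be glued to gates. For a tree $T \in X$ one assembles the surface $R_T$ by placing a copy of $\overline{B}$ at each vertex of $T$ and a copy of $\overline{W}$ for each edge, attaching the ends of each edge-band to the gates of the two incident vertex-disks determined by the edge label. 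Since $T$ is connected and simply connected, $R_T$ is a non-compact simply connected surface, homeomorphic to the plane, and the planar model endows it with a canonical conformal structure, making it a Riemann surface; this yields part~(1). The essential point is that $T$ is a \emph{subgraph} of $\cG$, so only the edges actually present in $T$ contribute bands: this is exactly what encodes the \emph{partial} (pseudogroup) nature of the $F_n$-action, and is why an honest global suspension of a group action cannot be used.

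Next I would organise these leaves into one space. Set
\[
  \fM_G = \Bigl( X \times \overline{B} \ \sqcup\ \bigsqcup_{g \in G^{1+}} (D_g \times \overline{W}) \Bigr) \big/ \sim ,
\]
where $D_g = \dom \tau_g$ is the clopen domain of the generator $\tau_g$, and $\sim$ glues one end of the band slice $\{T\} \times \overline{W}$ (over $g$) to the $g$-gate of $\{T\} \times \overline{B}$, and its other end to the $g^{-1}$-gate of $\{\tau_g(T)\} \times \overline{B}$, where $\tau_g(T)$ denotes the tree $(T,e)\cdot g$. Thus the transverse coordinate is carried by $X$ (resp.\ by $D_g \subset X$) while the plaque coordinate is carried by $\overline{B}$ (resp.\ $\overline{W}$); the product charts modelled on $B \times \fX_i$ and $W \times \fX_i$, with $\fX_i$ clopen in $X$, provide the finite smooth foliated atlas $\cV$, whose transition maps are, in the transverse direction, restrictions of the $\tau_g$. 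I would check that $\sim$ is an equivalence relation and that the quotient is Hausdorff, using that all identifications occur along clopen sets via the homeomorphisms $\tau_g$ onto clopen images.

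I would then verify the transversal statement~(2). Define $t : X \to \fM_G$ by sending $(T,e)$ to the centre of the vertex-disk $\{T\} \times \overline{B}$. Since $B$ is a transverse slice to the plaques and the $\fX_i$ are clopen, $t$ is a homeomorphism onto a clopen complete transversal, meeting every leaf because each vertex of $T$ contributes a point of the orbit of $(T,e)$. Holonomy transport across the band glued for $g$ carries the centre of the disk over $T$ to the centre of the disk over $\tau_g(T)$, so the holonomy germ along that plaque path is exactly $\tau_g$; since $\fG = \langle\, \tau_g \st g \in F_n \,\rangle$, transporting along edge-paths in the trees shows $\Gamma|_{t(X)} = t_*\fG$.

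For compactness, note that $X$ is compact, $\overline{B}$ and $\overline{W}$ are compact, and the index set $G^{1+}$ is finite, so the disjoint union is compact and hence so is $\fM_G$; metrizability descends from that of $X$ and of the models (the non-compactness of the individual leaves $R_T$ does not obstruct compactness of the total space, just as for minimal flows on the torus). The step I expect to demand the most care is the global consistency of the construction: verifying that the gate-gluings satisfy the cocycle condition so that the charts assemble into a genuine smooth foliated atlas, that the conformal structures on overlapping pieces agree so the leaves are well-defined Riemann surfaces, and above all that the quotient is Hausdorff \emph{despite} the $F_n$-action being only partial. This last point is exactly where the subgraph structure of the trees must be used: one has to ensure that two slices are never identified along a gate whose edge is absent from $T$, which is guaranteed precisely by restricting the $g$-band to the clopen domain $D_g = \dom \tau_g$.
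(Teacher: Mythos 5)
Your overall strategy --- cover the transversal $X$ by clopen sets according to local combinatorial data, take products with compact planar plaque models, and glue along the partial maps $\tau_g$ so that the induced holonomy reproduces $\fG$ --- is the same as the construction the paper sketches (there $\fM_G = \bigsqcup_{\mathbf{a}} D_X(\mathbf{a},1)\times \Sigma_{\mathbf{a}}/\!\!\sim$, indexed by the isomorphism types $\mathbf{a}$ of radius-one balls rather than by separate vertex and edge pieces; that is only a difference of bookkeeping). However, there is a genuine flaw in your choice of local models. A closed disk with $2n$ disjoint boundary arcs as gates, together with bands $[0,1]\times[-\delta,\delta]$ glued only along their two ends, produces a \emph{ribbon} (two-dimensional) thickening of $T$: the complementary arcs of each vertex-disk boundary and the two sides $[0,1]\times\{\pm\delta\}$ of each band are never identified with anything, so $R_T$ is a surface with non-empty boundary. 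In particular $R_T$ is not homeomorphic to the plane, it is not a Riemann surface in the sense required by part (1), and boundary points of leaves cannot be covered by foliated charts $\phi_i : V_i \to U_i\times\fX_i$ with $U_i\subset\mR^2$ \emph{open}, so the object you build does not satisfy the conclusion as stated.

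The paper (and the references it cites) avoid this by taking as plaque models the \emph{boundary of the three-dimensional thickening} of the local graph: each vertex contributes a $2$-sphere with one open disk removed for each incident edge, and each edge contributes a cylinder glued along full boundary circles rather than a band glued along arcs; equivalently, one doubles your ribbon surface along its free boundary. With this modification the leaves are boundaryless surfaces, hence genuine Riemann surfaces with the conformal structure inherited from the fixed smooth models, and the rest of your argument --- Hausdorffness of the quotient via identifications along clopen sets, compactness from the finiteness of the index set together with compactness of $X$ and of the models, the transversal $t$ given by a marked point in each vertex piece, and the identification of the holonomy along an edge with $\tau_g$ --- goes through essentially verbatim.
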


We refer for a proof to \cite{Ghys1999,LozanoSp,Lukina2011}, and only outline 
the idea of the construction here. Namely, $X$ can be covered by a finite number 
of clopen sets $D_{X}(\mathbf{a},1)$ of radius $1$, where $\mathbf{a}$ is a 
subgraph of a compact ball of radius $1$ in $\cG$. To each of these sets we 
associate a compact surface with boundary $\Sigma_\mathbf{a}$ which can be 
thought of as the two-dimensional boundary of the thickening of $\mathbf{a}$, 
and is homeomorphic to a $2$-sphere with at most $4$ open disks taken out. We 
then parametrise the surfaces near their boundaries, and obtain a foliated space 
\begin{equation}
  \label{ec:cayley_pseudo}
  \fM_G = \bigsqcup_{\mathbf{a} \in \mathbf{A}}
              D_X(\mathbf{a},1) \times \Sigma_\mathbf{a}/\!\!\sim
\end{equation}
by imposing an appropriate equivalence relation on the disjoint union 
$\bigsqcup_{\mathbf{a}\in\mathbf{A}} D_X(\mathbf{a},1) \times \Sigma_\mathbf{a}$. 
Namely, this equivalence relation `glues' the surfaces $\Sigma_\mathbf{a}$ along 
the parametrised regions near their boundaries. The resulting foliated space 
$\fM_G$ has Riemannian leaves and is modeled transversely on the totally 
disconnected space $X$.

\begin{definition}[\cite{Lukina2011}]
  Let $G$ be a finitely generated group, and $\fM_G$ be a suspension of 
  $(X,\fG)$ as in Theorem \ref{thm-genconstruction}. Then a \emph{graph matchbox 
  manifold} is the closure $\cM = \overline{L}$ of a leaf $L$ in $\fM_G$.
\end{definition}

% ssS Foliated space $\fM_G$ with foliation by Riemann surfaces (end)

\subsubsection{Properties of graph foliated spaces}
% (fold)

Some general properties of graph matchbox manifolds were shown in 
\cite{Lukina2011}. The first property we mention is the \emph{universal 
property}, which allows to reduce the study of all variety of foliated spaces 
$\fM_G$ to the cases $G = F_n$, a free group on $n$ generators. If $G = F_n$, we 
denote the space of pointed trees by $X_n$, the pseudogroup by $\fG_n$ and the 
corresponding suspension by $\fM_n$.

\begin{proposition}[\cite{Lukina2011}]
  \label{prop-univprop}
  Given a group $G$ with a set of generators $G^1 = G^{1+}\cup G^{1-}$, 
  $\card G^{1+}\leq n$, there exists a foliated embedding 
  \[
    \Phi:\fM_G \to \fM_n,
  \]
  where $\fM_G$ and $\fM_n$ are suspensions of $(X,\fG)$ and $(X_n,\fG_n)$ as in 
  Theorem~\ref{thm-genconstruction}.
\end{proposition}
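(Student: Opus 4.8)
The plan is to build the foliated embedding $\Phi:\fM_G \to \fM_n$ in two stages: first construct an equivariant embedding of the transverse spaces $X \to X_n$ (with respect to the pseudogroups $\fG$ and $\fG_n$), and then extend it to the suspensions leafwise. The second stage is essentially formal once the first is in place, because both $\fM_G$ and $\fM_n$ are assembled from the same building blocks via the gluing construction \eqref{ec:cayley_pseudo}: a transverse embedding that intertwines the pseudogroup actions carries leaves to leaves and respects the holonomy, so it induces a foliated map on the suspensions. Hence the real content is constructing the transverse embedding, and I expect that to be where all the work lies.

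To embed $X$ into $X_n$, I would fix an injection of the positive generators $G^{1+}$ into the positive generators $F_n^{1+}$, which is possible precisely because $\card G^{1+} \leq n$. This identifies each labeled edge type of $\cG$ with a labeled edge type of the Cayley graph $\cF_n$ of $F_n$. The idea is then to send a pointed tree $(T,e) \in X$ to the pointed tree in $X_n$ obtained by relabeling: a path $\delta \in \cP_e(T)$ spelling a word $\tilde h_{i_1}\cdots \tilde h_{i_m}$ in the generators of $G$ is sent to the path spelling the corresponding word in $F_n$ under the fixed injection. Because $F_n$ is free, distinct reduced words give distinct vertices, so this assignment produces a genuine subtree of $\cF_n$; one must check it is non-compact, connected, simply connected, and contains $e$, so that the image indeed lies in $X_n$.

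The key properties to verify are that the resulting map is \emph{injective}, \emph{continuous with continuous inverse onto its image}, and \emph{equivariant}. Injectivity and the homeomorphism-onto-image property should follow from the fact that the relabeling is compatible with the box metric $d_X$: the isomorphism type of the ball $B_T(e,r)$ determines and is determined by the isomorphism type of its image, so $r(T,T')$ is preserved up to a controlled comparison, and \eqref{eq-metric1} then gives continuity in both directions. Equivariance is where Definition~\ref{defn-actionany} is used directly: if $(T,e)\cdot g$ is defined for $g \in F_n$ realised by a path in $\cP_e(T)$, then the image path realises the corresponding element in the larger free group, so the map intertwines $\tau_g$ with the corresponding generator of $\fG_n$.

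The main obstacle I anticipate is that the pseudogroup $\fG$ for a general group $G$ is built from words in $G$ that may satisfy relations, whereas $\fG_n$ is built from the \emph{free} group $F_n$; two distinct reduced words over $G^{1+}$ that represent the same element of $G$ will collapse to the same vertex of $\cG$, but their images are distinct vertices of $\cF_n$. I must therefore confirm that the relabeling is still well-defined on trees and respects the pseudogroup structure rather than merely on abstract words. The resolution is that the domain of the map consists of actual subtrees $T \subset \cG$, where the shortest-path structure already singles out a preferred reduced representative for each vertex via $p:\cP_e(T)\to F_n$; the embedding is defined on this geometric data, not on words in $G$, so no ill-definedness arises, and the freeness of $F_n$ only makes the target larger and easier to embed into. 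Verifying that this geometric consistency holds uniformly, and that it survives passage to the suspension via the local charts $\phi_i$, is the step I would treat most carefully.
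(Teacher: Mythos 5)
This proposition is quoted from \cite{Lukina2011}, and the present paper gives no proof of it; your proposal reconstructs essentially the argument of that reference: fix an injection $G^{1+}\hookrightarrow F_n^{1+}$, lift each pointed subtree $(T,e)\subset\cG$ isometrically into $\cF_n$ via the unique-shortest-path map $p:\cP_e(T)\to F_n$, check injectivity, continuity and equivariance in the box metric, and then pass to the suspensions. Your resolution of the relations issue --- the map is defined on geometric subtrees, where each vertex already carries a preferred reduced word, rather than on elements of $G$ --- is exactly the right one, so the proposal is correct and takes the same route as the cited proof.
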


% ssS Properties of graph foliated spaces (end)

\subsubsection{Bernoulli shifts versus graph matchbox manifolds}

As it has already been mentioned in Section~\ref{intro}, the question about the 
relation between transverse dynamics of the space of graph matchbox manifolds 
and Bernoulli shifts is a natural one, as the following properties of graph 
matchbox manifolds show. Recall that the definition of an $\epsilon$-expansive 
dynamical system can be generalised for pseudogroup dynamical systems as 
follows.

\begin{definition}[\cite{Hurder2010}]\label{def-exppseudo}
  The pseudogroup $\Gamma$ on $\Omega$ is $\epsilon$\emph{-expansive} if for all 
  $w\ne w'\in\Omega$ with $d(w,w')<\epsilon$ there exists $\gamma\in\Gamma$ with 
  $w,w'\in\dom \gamma$ such that $d(\gamma(w),\gamma(w'))\geq\epsilon$. 
\end{definition}

Bernoulli shifts $\Sigma(G,S)$ are expansive \cite{Cnrt}, and it was shown in 
\cite{Lukina2011} that $(X_n,\fG_n)$ is expansive. Besides, \cite{Lukina2011} 
proves that that the set of periodic orbits in $(X_n,\fG_n)$ (equivalently, the 
set of compact leaves in $\fM_n$) is dense, and there are points with dense 
orbits in $(X_n,\fG_n)$ (equivalently, dense leaves in $\fM_n$). Therefore, it 
is natural to study the relation between Bernoulli shifts (see 
\cite{KatokHasselblatt} on dynamics of shifts) and pseudogroup 
dynamical systems $(X_n,\fG_n)$.

% ssS Bernoulli shifts versus graph matchbox manifolds (end)
% sS Graph matchbox manifolds (end)

\section{Embedding shift spaces into the space of pointed trees}
\label{sec-prooftheor}
% (fold)

We now give a proof of Theorem~\ref{thm-main}, that is, for a finitely generated 
group $G$, a finite set of symbols $S$ and a given injective map 
$\alpha:G^{1+} \times S\to F_n^{1+}$, where $G^{1+}$ denotes a `positive' part 
of a symmetric set of generators $G^1$ of $G$, there exists an embedding with 
appropriate equivariance properties 
\[
  \Phi_\alpha : \Sigma(G,S) \to X_n,
\]
for $n$ large enough.

Let $G = \langle\, G^{1} \st R \,\rangle$ be a presentation of $G$, where $G^1$ 
is a finite symmetric set of generators, and $R$ is a set of relations, possibly 
infinite. Denote by $M$ the cardinality of $G^{1+}$ and let 
$f: F_M \to F_M/R \cong G$ be a homomorphism. Then there is an induced map 
$\hat{f} : \Sigma(G,S) \to \Sigma(F_M,S)$ between the corresponding shift spaces 
given by
\[
  \hat{f}(\sigma)(w) = \sigma(f(w)), ~~ w \in F_M.
\]
The map $\hat{f}$ is continuous, indeed, for a finite subset $F\subset F_M$ and 
a $\sigma' \in\Sigma(F_M,S)$ we have
\[
  \hat{f}^{-1} (C_{F,\sigma' }) = 
    \bigl\{\, 
        \sigma \in\Sigma(G,S) \bigst 
                \sigma(f(w)) = \sigma'(w)~ \textrm{for any}~ w\in F 
    \,\bigr\}.
\]
The map $\hat{f}$ is also injective: given two different elements 
$\sigma, \sigma' \in \Sigma(G,S)$ there is $g\in G$ such that 
$\sigma(g) \neq \sigma'(g)$, and therefore 
$\hat{f}(\sigma) \neq \hat{f}(\sigma')$ as they differ on the set $f^{-1}(g)$. 
Since Bernoulli shift spaces are compact and Hausdorff, $\hat{f}$ is an 
embedding. Finally, since $f$ is a homomorphism, $\hat{f}$ is equivariant, that 
is,
\[
  \bigl[\hat{f}(\sigma) \cdot \gamma \bigr](w) = \hat{f}(\sigma)(\gamma w) = \sigma(f(\gamma w)) = \bigl[\sigma\cdot f(\gamma)\bigr](f(w))
                = \hat{f} [ \sigma \cdot f(\gamma) ](w)
\]
for any $w\in F_M$. This argument shows that to prove Theorem~\ref{thm-main}, it 
is enough to show that the embedding exists for free groups. This embedding will 
depend on the choice of the map $\alpha$, as in the proposition below.

\begin{proposition}
  \label{prop-freegroup}
  Let $F_M$ be a free group. Given an injective map 
  $\alpha:F_M^{1+} \times S\to F_n^{1+}$, there exists an embedding 
  \[
    \Phi_\alpha : \Sigma(F_M,S) \to X_n,
  \]
  equivariant with respect to the actions of $F_M$ and $F_n$. More precisely, 
  for any $\sigma \in \Sigma(F_M,S)$ and any $h \in F_M^{1+}$ we have
  \[
    \Phi_\alpha(\sigma \cdot h)
              = \Phi_\alpha(\sigma) \cdot \alpha(h, \sigma(e)),
  \]
  and if $h^{-1} \in F_M^{1+}$ then 
  \[
    \Phi_\alpha(\sigma \cdot h)
        = \Phi_\alpha(\sigma) 
                  \cdot \bigl[\alpha(h^{-1}, \sigma(e))\bigr]^{-1}.
  \]
\end{proposition}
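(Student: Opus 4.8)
The plan is to construct $\Phi_\alpha$ explicitly: to each $\sigma \in \Sigma(F_M, S)$ I will associate a pointed tree $T_\sigma \in X_n$ whose combinatorial structure records $\sigma$. The key observation is that the Cayley graph of $F_M$ is itself a tree, so I can build $T_\sigma$ by taking this tree and relabeling its edges using $\alpha$. Concretely, for each vertex $g \in F_M$ and each positive generator $h \in F_M^{1+}$, the edge of the Cayley graph joining $g$ to $gh$ will be relabeled by the generator $\alpha(h, \sigma(g)) \in F_n^{1+}$; the value of $\sigma$ read off at the \emph{source} vertex $g$ thus determines which letter of $F_n$ labels the outgoing edge in direction $h$. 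Since $\alpha$ is injective, distinct pairs $(h, s)$ give distinct labels, so the labeling is consistent and the resulting labeled graph embeds isometrically (as a labeled graph) into the Cayley graph $\cF_n$ of $F_n$, i.e. $T_\sigma$ is a legitimate element of $X_n$ pointed at $e$. I would then define $\Phi_\alpha(\sigma) = (T_\sigma, e)$.

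Next I would check the two equivariance formulas, which are really the heart of the statement and should follow by unwinding the definition of the pseudogroup action from Definition~\ref{defn-actionany}. Recall that $\sigma \cdot h$ is the shifted configuration $(\sigma \cdot h)(w) = \sigma(hw)$, so its tree $T_{\sigma \cdot h}$ is $T_\sigma$ re-pointed at the vertex $h$ and re-read from there. On the other side, $\Phi_\alpha(\sigma) \cdot g$ for $g = \alpha(h, \sigma(e))$ moves the basepoint along the edge labeled $g$ emanating from $e$. By the construction, the edge leaving $e$ in the $h$-direction is precisely labeled $\alpha(h, \sigma(e))$, so following $g$ lands the basepoint at $h$, and the re-pointed trees agree; the second formula, for $h^{-1} \in F_M^{1+}$, is the same computation with the roles of source and target reversed, which is why the label there is read at $\sigma(h)$ (the source of that positive edge is $h$, not $e$) and inverted. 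I would write out just enough of this to confirm that the labeled re-pointed trees are isomorphic in the sense required by the pseudogroup.

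Finally I would verify that $\Phi_\alpha$ is an embedding and an orbit equivalence. Injectivity follows because $\sigma$ can be recovered from $T_\sigma$: by injectivity of $\alpha$, the label on the edge leaving any vertex $g$ in a fixed direction $h$ determines $\sigma(g)$, so the labeled tree remembers the full configuration. Continuity, and hence being an embedding (both spaces are compact Hausdorff, $\Sigma(F_M, S)$ is a Cantor set), reduces to matching the product-topology basic clopen sets $C_{F, \sigma'}$ with the balls defining the box metric $d_X$: agreement of $\sigma$ on a finite ball in $F_M$ translates into agreement of the labeled balls $B_{T_\sigma}(e, r)$ up to isomorphism. The orbit equivalence then follows from the two equivariance formulas by induction on word length, since every element of $F_M$ is a product of positive generators and their inverses.

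I expect the main obstacle to be bookkeeping in the second equivariance formula, where $h^{-1}$ rather than $h$ is the positive generator. One must be careful that in $T_\sigma$ an \emph{unoriented} edge carries a single label determined by the configuration value at its source (the endpoint closer to where that positive generator points \emph{out}), so that traversing it backward corresponds to the inverse element $[\alpha(h^{-1}, \sigma(h))]^{-1}$ of $F_n$, with the configuration sampled at $h$ and not at $e$. Getting the source/target conventions to line up with the orientation conventions baked into the definition of the map $p$ and the pseudogroup action is the delicate point; the rest is routine verification.
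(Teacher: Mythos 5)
Your construction is essentially the paper's: the paper builds $T_\sigma$ inductively on the balls $K_j$ via the maps $\kappa_j$, but the resulting labeled tree is exactly your relabeled Cayley graph of $F_M$, with the edge from $g$ to $gh$ ($h\in F_M^{1+}$) carrying the label $\alpha(h,\sigma(g))$, and your continuity, injectivity and equivariance checks match the paper's (sequential continuity via agreement of balls, injectivity via recovering $\sigma$ from the labels, equivariance via the basepoint-moving isomorphism). Note also that your reading of the second equivariance formula, with the configuration sampled at $h$ so that the element acting is $\bigl[\alpha(h^{-1},\sigma(h))\bigr]^{-1}$, is what the construction actually yields and agrees with the statement of Theorem~\ref{thm-main}; the $\sigma(e)$ appearing in the proposition's displayed formula is a typo.
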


\begin{proof}
  Given $\sigma\in\Sigma(F_M,S)$, we construct $\Phi_\alpha(\sigma) = T_\sigma$ 
  as a subgraph of $\cF_n$ by induction. 
  
  For $j \geq 1$, let $F_M^j$ be the set of words in $F_M$ of length $j$. Also set $F_M^0 = \{e\}$. The set of words of length less or equal to 
  $j$ is denoted by $F_M^{\leq j} = \bigcup_{0\leq i\leq j} F_M^i$. Define  
  $K_0 = \{e\}\subset\cF_n$ to be a graph consisting of just one vertex $e$ and 
  no edges. Finally we define 
  $\kappa_0:F_M^{\leq 0} = F_M^0 \to V(K_0)\subset V(\cF_n)$ in the only 
  possible way.

  We proceed by induction. We assume that we are given the following data:
  \begin{enumerate}

    \item A connected subgraph $K_{j-1} \subset \cF_n$ containing $e$.

    \item A bijective map $\kappa_{j-1} : F_M^{\leq j-1} \to V(K_{j-1})$ such 
      that if $w_k\in F_M^{\leq j-1} $, $k=1,2$, and $w_2 = w_1 h$ for some 
      $h\in F_M^{1+}$, then
      \[
        \kappa_{j-1}(w_2) = \kappa_{j-1}(w_1) \alpha(h,\sigma_1(e)),
      \]
      where $\sigma_k = \sigma \cdot w_k$, $k=1,2$.

  \end{enumerate}
  We now construct a set $F_M^{\leq j}$, a graph $K_j$ and a map 
  $\kappa_j:F_M^{\leq j} \to V(K_j)$. For any $w'\in F_M^{\leq j-1}$ set 
  $\kappa_j(w') = \kappa_{j-1}(w')$. Since $F_M$ is a free group, given 
  $w_2\in F_M^j$ there is a unique $w_1\in F_M^{j-1}$ such that 
  $w_2 = w_1 h$, and either $h \in F^{1+}_M$ or $h^{-1} \in F^{1+}_M$. In the 
  first case set
  \begin{equation}
    \label{ec:kappa_j1}
    \kappa_j(w_2) = \kappa_{j-1}(w_1) \alpha(h, \sigma_1(e)),
  \end{equation}
  and otherwise set
  \begin{equation}
    \label{ec:kappa_j2}
    \kappa_j(w_2) 
      = \kappa_{j-1}(w_1)
                \bigl[\alpha(h^{-1}, \sigma_2(e))\bigr]^{-1},
  \end{equation}
  where, as before, the juxtaposition denotes group multiplication in $F_n$. Let 
  $K_j$ be a subgraph of $\cF_n$ with the set of vertices 
  $\{\kappa_j(F_M^{\leq j})\}$. Define $T_\sigma = \bigcup_{j=0}^\infty K_j$. We 
  notice that the graph $K_j$ is in fact a closed ball $B_{T_\sigma}(e,j)$ of 
  radius $j$ in $K_j$.
  
  We show that $\Phi_\alpha$ is continuous. As the involved spaces are 
  metrizable it is enough to show sequential continuity. Let 
  $\{\sigma_k\} \to \sigma$, then (passing to a subsequence if necessary) there 
  exists an increasing sequence of integers $\{r_k\}$ such that
  \begin{equation}
    \label{ec:sigma_k}
    \sigma_\ell(w) = \sigma(w) \;
      \text{for all words $w\in F^{\leq k}_M$ and all $\ell \geq r_k$.}
  \end{equation}
  Now, by construction each tree is a union of finite graphs, that is, 
  $T_{\sigma_k} = \bigcup_{j\geq 0} K^k_j$, and 
  $T_\sigma=\bigcup_{j\geq 0} K_j$. Then \eqref{ec:sigma_k} implies that 
  $K^\ell_{k} = K_{k}$ for all $\ell \geq r_k$. Therefore, $\{T_{\sigma_k}\}$ 
  converges to $T_\sigma$, and $\Phi_\alpha$ is continuous.

  A continuous map between a compactum and a Hausdorff space is always closed, 
  and therefore, to show that $\Phi_\alpha$ is an embedding, it is enough to 
  show that it is injective. If $\sigma\neq\sigma'\in\Sigma(F_M,S)$, there is 
  $w\in F^{\leq j}_M$ for some $j\geq 0$ such that $\sigma(w)\neq\sigma'(w)$. 
  The definition of $\kappa_j$ in \eqref{ec:kappa_j1} and \eqref{ec:kappa_j2} 
  forces $K_j\neq K_j'$, where $T_\sigma = \bigcup_{j\geq0} K_j$ and 
  $T_{\sigma'} = \bigcup_{j\geq0} K'_j$. Hence $T_\sigma\neq T_{\sigma'}$.

  Finally, we show the equivariance of the embedding. We consider only the case 
  $h\in F_M^{1+}$, as the argument in the case $h^{-1} \in F_M^{1+}$ is similar. 
  Denote $\sigma' = \sigma \cdot h$. By the definition of the $F_M$-action 
  on $\Sigma(F_M,S)$ and by the construction there is map
  \[
    V(T_{\sigma'}) \to V(T_{\sigma}) :
          g \mapsto g\alpha(h, \sigma'(h^{-1}))^{-1},
  \]
  which means that $(T_{\sigma'},e)$ is isomorphic to 
  $(T_{\sigma}, \alpha(h, \sigma(e)))$. Then 
  \[
    T_{\sigma'} = T_\sigma \cdot  \alpha(h, \sigma(e)).
  \]
\end{proof}

\begin{remark}
  Given a graph $T_\sigma \in \rg(\Phi_\alpha)$, it is useful to know how to 
  recover $\sigma \in \Sigma (F_M,S)$. Therefore, we outline here the 
  construction of the inverse $\Phi_\alpha^{-1}$. We will denote by 
  $pr_i$ the $i^\text{th}$ coordinate projection.

  Let $T \in \rg\Phi_\alpha$, then every vertex in $T$ has degree $2M$, and the 
  labels of edges in $T$ are necessarily contained in 
  $\rg\alpha\subset F_{n}^{1+}$. Moreover, the following can be deduced from the 
  properties of free groups and the construction.
  
  Let $g \in V(T)$ be a vertex, $w_{h_1}$ and $w_{h_2}$ be edges such that 
  $\{g\} = V(w_{h_1}) \cap V(w_{h_2})$ and let $g_i \in  V(w_{h_i})\smallsetminus \{g\}$, 
  $i =1,2$, be another vertex. Suppose $g_i = g h_i$, $i=1,2$. Then
  \begin{equation}
    \label{eq:startvrule1}
    pr_2 \circ \alpha^{-1}(h_1) = pr_2 \circ \alpha^{-1}(h_2) = s \in S,
  \end{equation}
  and there is precisely $M$ edges with this property. 

  By construction $T = \bigcup_{j=1}^\infty K_j = B_T(e,j)$. We construct 
  $\sigma$ by induction.

  Set $F^0_M = \{e\} \in F_M$, then $\lambda_0 : V(K_0) = \{e\} \to F^0_M$ is 
  just the map of the identities. Let $E \subset E(T)$ be the set of $2M$ edges 
  adjacent to the identity $e$. For $w_h \in E$, let 
  $g \in V(w_h) \smallsetminus \{e\}$, 
  then either $g=h$ or $g = h^{-1}$ for $h \in F_n^{1+}$. In any case, 
  $h\in \rg\alpha$, and there is a unique pair $(\gamma,s) = \alpha^{-1}(h)$. If 
  $g = h^{-1}$, define $\lambda_1:V(K_1) \to F^{1-}_M$ and 
  $\sigma|_{F^{1-}_M}: F^{1-}_M \to S$ by
  \[
    \lambda_1(g) = \gamma^{-1}, ~ \textrm{and} ~ \sigma_1(\lambda_1(g)) = s.
  \]
  In the case $g = h$ set $\sigma(e) = s$, and $\lambda_1(g) = \gamma$. The 
  value of $\sigma(e)$ is well-defined by the property \eqref{eq:startvrule1}. 
  To define $\sigma$ on $F^{1+}_M$ we have to look at edges in 
  $B_T(e,2) \smallsetminus B_T(e,1)$.

  For every $g \in V(K_1)$ such that $g = h \in F^{1+}_n$, let 
  $E'\subset B_T(e,2)$ be the set of edges adjacent to $g$. By the properties of 
  $T$ there exists $w_{h'}$ such that for $g' \in V(w_{h'}) \smallsetminus \{g\}$ we have 
  $g' = gh'$. Let $(\gamma',s') = \alpha^{-1}(h')$, and set $\sigma(g) = s'$. 
  Then $\sigma(g)$ is well-defined by the property \eqref{eq:startvrule1}.

  To obtain the full map $\sigma$, proceed inductively. Given an injective map 
  $\lambda_{j-1}:V(K_{j-1})\to F_M^{\leq j-1}$, and $g \in V(K_{j-1}) \cap F^{j-1}_n$, 
  consider the set of edges adjacent to $g$ which do not lie in $K_{j-1}$. For 
  such an edge $w_h$ let $g'$ be the vertex in $V(w_h) \smallsetminus \{g\}$, and if 
  $g' = g h^{-1}$, define $\lambda_j(g') = \lambda_{j-1}(g) \left[ pr_1 \circ \alpha^{-1}(h)\right]^{-1}$ 
  and $\sigma(\lambda_j(g')) = pr_2 \circ \alpha^{-1}(h)$. If $g' = gh$, set 
  $\lambda_j(g') = \lambda_{j-1}(g) \left[ pr_1 \circ \alpha^{-1}(h)\right]$, and to determine 
  $\sigma(\lambda_j(g'))$ consider the set of edges adjacent to $g'$. Like at 
  the first step of the inductive procedure, there is an edge $w_{h'}$ such that 
  for $g'' = V(w_{h'}) \smallsetminus \{g'\}$ we have $g'' = g' h'$. Then let 
  $\sigma(\lambda_j(g')) = pr_2 \circ \alpha^{-1}(h')$.
\end{remark}
 
\begin{remark}
  Given a set of generators $G^{1+}$, the tree $T_\sigma$ is the universal 
  covering space of the graph of the orbit of $\sigma$ in $\Sigma(G,S)$.
\end{remark}

\begin{example} 
  Let $G = \mZ$ and $S=\{0,1\}$, then $\Sigma(\mZ,S)$ is the set of 
  bi-infinite sequences of $0$ and $1$. The positive generating set of $\mZ$ is 
  $\{1\}$. Consider the free group with $2$ generators $F_2$ with positive 
  generating set $\{a,b\}$. Define
  \[
    \alpha: \{1\} \times \{0,1\} \to \{a,b\}
  \]
  by $\alpha(1,0) = a$ and $\alpha(1,1) = b$.

  Let $\sigma : \mZ \to S$ be given by $\sigma(k) = 0$ if $k$ is even and 
  $\sigma(k) = 1$ if $k$ is odd. Then the orbit of $\sigma$ in $\Sigma(\mZ,S)$ 
  contain just two points, $\sigma$ itself and $\sigma' = \sigma + 1$. One 
  thinks of graphs $T_\sigma, T_{\sigma'} \subset \cF_2$ as `ladders' made up of 
  $a$ and $b$-edges. Each graph is a covering space of the graph of the orbit of 
  $T_\sigma$ in $X_2$, which consists of two vertices $T_\sigma$ and 
  $T_{\sigma'}$ joined by two edges, one marked by $a$ and another one marked by 
  $b$.
  \begin{figure}
    
    \includegraphics{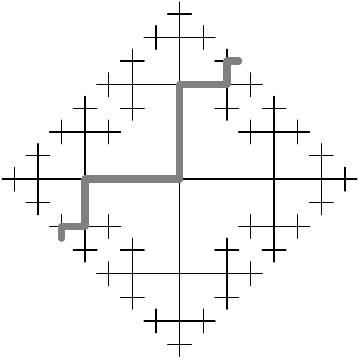}
   \caption{The tree $T_{\sigma}$}
    \label{fig:shiftZ}
  \end{figure}
\end{example}

\begin{remark}
  The embedding of Proposition~\ref{prop-freegroup} is highly non-unique. Given 
  a symmetric set of generators $F_M^1$ and a set $S$, the embedding depends on 
  the choice of $n \geq M \card S$, of a `positive' set $F_M^{1+}$ and of the 
  map $\alpha: F_M^{1+}\times S\to F_n^{1+}$. Moreover, the embedding of 
  $\Sigma(G,S)$ in Theorem~\ref{thm-main} depends on the choice of the 
  homomorphism $f:F_n\to G$.
\end{remark}

\begin{remark}
  Geometrically non-trivial relations in $G = \langle G^1 ~|~ R \rangle$ 
  manifest themselves as symmetries of graphs $T_\sigma$. For example, if 
  $G = \mZ^2$, then for every $g \in F_2$ and every $\sigma \in \Sigma(F_M,S)$ 
  we have $\sigma(g) = \sigma(aba^{-1}b^{-1}g)$, and for every $R>0$ the ball 
  $B_{T_\sigma}(e,R)$ is isometric to the ball 
  $B_{T_\sigma}(aba^{-1}b^{-1}, R)$, and similarly for other elements in $\mZ^2$ 
  congruent to $e$.
\end{remark}

% (end)

\section{Universal space for compactly generated pseudogroups}
\label{sec-univ}
% (fold)

In order to prove Theorem~\ref{cor-pseudgroupuniv}, we adapt the proofs of 
\cite[Propositions 2.6, 2.8]{Cnrt} and Proposition~\ref{prop-freegroup}. We first recall some background facts.

In \cite{haefliger1985,haefliger2002} A. Haefliger showed that the holonomy 
pseudogroup of a compact foliated manifold satisfies a finiteness condition, called \emph{compact generation}.

\begin{definition}
  A pseudogroup of transformations $\Gamma$ of a space $\Omega$ is compactly 
  generated if $\Omega$ contains a relatively compact open set $Y$ meeting all 
  orbits of $\Gamma$, and the reduced pseudogroup $\Gamma|_Y$ is generated 
  by a finite set $\Lambda_Y$ of elements of $\Gamma$ such that each element 
  $\lambda\in\Lambda_Y$ is the restriction of an element 
  $\overline\lambda\in\Gamma$ with the closure of $\dom\lambda$ contained in 
  $\dom\overline\lambda$. Such generating system $\Lambda_Y$ is called a 
  \emph{compact generation system (CGS) of $\Gamma$ on $Y$}.
\end{definition}

In the totally disconnected case, it is possible to chose a CGS with compact 
domains, as the following lemma shows.
      
\begin{lemma}
  \label{lemma:CGS<->Full}
  Let $\Gamma$ be a pseudogroup of transformations of a $0$-dimensional locally 
  compact metrizable space $\Omega$. If $\Gamma$ is compactly generated there 
  exists a CGS $\Lambda_Y$ on a compact and open set $Y\subset\Omega$, where 
  $\dom\lambda$ is compact and open for each $\lambda\in\Lambda_Y$.
\end{lemma}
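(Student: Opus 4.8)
The plan is to use the two standard features of a $0$-dimensional locally compact metrizable space: the compact open sets form a basis for the topology, and any compact set $K$ contained in an open set $U$ can be sandwiched, $K\subseteq W\subseteq U$, by a compact open set $W$ (cover $K$ by basic compact open subsets of $U$ and take a finite union). Everything reduces to these two facts, together with the hypothesis that $\Gamma$ admits some CGS $\Lambda_{Y_0}=\{\lambda_1,\dots,\lambda_m\}$ on a relatively compact open set $Y_0$ meeting every orbit, with extensions $\bar\lambda_i\in\Gamma$ satisfying $\overline{\dom\lambda_i}\subseteq\dom\bar\lambda_i$.

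First I would fix the generators. For each $i$ the set $\overline{\dom\lambda_i}$ is compact (it sits inside the compact set $\overline{Y_0}$) and is contained in the open set $\dom\bar\lambda_i$, so the sandwich property yields a compact open $W_i$ with $\overline{\dom\lambda_i}\subseteq W_i\subseteq\dom\bar\lambda_i$. Put $\mu_i=\bar\lambda_i|_{W_i}$; then $\dom\mu_i=W_i$ is compact open, $\mu_i$ is the restriction of $\bar\lambda_i$ with $\overline{\dom\mu_i}=W_i\subseteq\dom\bar\lambda_i$, and $\lambda_i$ is itself a restriction of $\mu_i$. Since $W_i$ is compact and $\mu_i$ a homeomorphism onto its image, $\rg\mu_i$ is compact. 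Next I would choose the new transversal: let $Y$ be a compact open set containing the compact set $\overline{Y_0}\cup\bigcup_i(W_i\cup\rg\mu_i)$, which exists again by the sandwich property. Then each $\mu_i$ lies in $\Gamma|_Y$, and $Y$ meets every orbit since $Y\supseteq Y_0$.

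It remains to add enough generators to account for points of $Y$ lying outside $Y_0$. Since $Y_0$ meets every orbit, for each $y\in Y$ there is $\gamma\in\Gamma$ with $y\in\dom\gamma$ and $\gamma(y)\in Y_0$; shrinking, I may take a compact open neighbourhood $U$ of $y$ with $U\subseteq\dom\gamma$ and $\gamma(U)\subseteq Y_0$, and set $g=\gamma|_U$. By compactness of $Y$ finitely many such maps $g_1,\dots,g_k$, with compact open domains $U_1,\dots,U_k$ covering $Y$ and with ranges in $Y_0$, suffice; each $g_j$ has the required restriction--extension property. I then declare $\Lambda_Y=\{\mu_1,\dots,\mu_m\}\cup\{g_1,\dots,g_k\}$, a finite subset of $\Gamma|_Y$ all of whose members have compact open domains.

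The substantive step, and the one I expect to be the main obstacle, is verifying that $\Lambda_Y$ generates $\Gamma|_Y$. Given $h\in\Gamma|_Y$ and a point $a\in\dom h$, choose $j$ with $a\in U_j$ and $j'$ with $h(a)\in U_{j'}$; then $\phi=g_{j'}\circ h\circ g_j^{-1}$ is, near $g_j(a)$, an element of $\Gamma$ with domain and range in $Y_0$, hence an element of $\Gamma|_{Y_0}$. As $\Lambda_{Y_0}$ generates $\Gamma|_{Y_0}$, near $g_j(a)$ the map $\phi$ is a word in restrictions and inverses of the $\lambda_i$, and since each $\lambda_i$ is a restriction of $\mu_i$, it is generated by $\{\mu_i\}$. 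Consequently $h=g_{j'}^{-1}\circ\phi\circ g_j$ is generated by $\Lambda_Y$ on a neighbourhood of $a$; covering $\dom h$ by such neighbourhoods and using that a pseudogroup is closed under gluing, $h$ itself is generated by $\Lambda_Y$. The care needed here lies in checking that all intermediate domains and ranges stay where they should and that the local pieces glue to a single well-defined element, but these are precisely the points controlled by the compact open basis and by the fact that $Y_0$ meets every orbit.
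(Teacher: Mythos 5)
Your proof is correct and follows essentially the same route as the paper's: fatten each generator's domain to a compact open set inside the domain of its extension, enlarge the transversal to a compact open set, and adjoin finitely many maps with compact open domains carrying points back into the original transversal so as to recover generation. The only differences are cosmetic --- you let the auxiliary maps cover all of $Y$ (so generation is verified by a single conjugation $g_{j'}\circ h\circ g_j^{-1}$ rather than the paper's case analysis), and you are slightly more careful in forcing the ranges of the modified generators to lie inside $Y$.
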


\begin{proof}
  By definition there exists a CGS $\Lambda_Z$ of $\Gamma$ on an open relatively 
  compact set $Z$ which meets all $\Gamma$-orbits. For each 
  $\lambda\in\Lambda_Z$, $\overline{\dom\lambda}$ is compact and contained in 
  the open set $\dom\overline\lambda$ for some $\overline\lambda\in\Gamma$. 
  By the hypothesis, the space $\Omega$ has a countable basis of 
  compact and open sets, and so we can cover $\overline{\dom\lambda}$ with a  
  finite number of compact open sets contained in $\dom\overline\lambda$. 
  Then their union $D_\lambda$ is again compact and open. We then define 
  $\lambda' = \overline\lambda|_{D_\lambda}$ which satisfies 
  $\lambda = \lambda'|_{\dom\lambda}$.

  Since $\Lambda_Z$ is finite, the set 
  $Y = \bigcup_{\lambda\in\Lambda_Z} D_\lambda$ is compact and open. Since 
  $Z\subset Y$, $Y$ meets all $\Gamma$-orbits. But in general the finite family 
  $\{\,\lambda' \st \lambda\in\Lambda_Z\,\}$ is not a generating set for 
  $\Gamma|_Y$, which we can amend for by adding a finite number of maps in the 
  following way. Since $Z$ meets every orbit, for each $y\in Y\smallsetminus Z$ 
  we can find $\gamma_y\in\Gamma$ such that its domain is compact, open and 
  contained in $Y$ and its range is contained in $Z$. Since $Y\smallsetminus Z$ 
  is compact, there is a finite family of maps $\{\gamma_i\}_{i\in I}$ such that 
  the union of their domains covers $Y\smallsetminus Z$.

  Finally, we claim that
  $\Lambda_Y = \{\,\lambda',\gamma_i\st\lambda\in\Lambda_Z, i\in I\,\}$ 
  is CGS of $\Gamma$ on $Y$. To see that, let $\gamma\in\Gamma|_Z$. We claim 
  that $\gamma$ locally belongs to $\langle\Lambda_Y\rangle$ and therefore it 
  belongs $\langle\Lambda_Y\rangle$ globally by properties of pseudogroups. 
  Indeed, if there is $z\in Z$  such that $\gamma(z)\in Z$ then this just 
  follows from the hypothesis. If $z \in Z$ and 
  $\gamma(z) = y\in Y\smallsetminus Z$, then there exists $\gamma_i$ such that 
  $\gamma_i(y)\in Z$ and locally $\gamma_i\circ\gamma\in\Gamma|_Z$ can be 
  written as a $\Lambda_Z$-word. Then $\gamma = \gamma_i^{-1}\circ w$ which is a 
  $\Lambda_Y$-word. The statement holds in the two other cases by a similar 
  argument.
\end{proof}

Recall from definition \ref{def-exppseudo} (see also \cite{Hurder2010}) that a 
pseudogroup $\Gamma$ on a Polish space $\Omega$ is $\epsilon$\emph{-expansive} 
if for all $w\ne w'\in\Omega$ with $d(w,w')<\epsilon$ there exists 
$\gamma\in\Gamma$ with $w,w'\in\dom h$ such that 
$d(\gamma(w),\gamma(w'))\geq\epsilon$, where $d$ is a metric on $\Omega$. We now 
prove Theorem~\ref{cor-pseudgroupuniv}, which shows that if $\Omega$ is compact, 
and $\Gamma$ is compactly generated and $\epsilon$-expansive, then the 
pseudogroup dynamical system $(\Omega, \Gamma)$ can be equivariantly embedded 
into the space of pointed trees $X_n$, for $n$ large enough. Together with 
Theorem~\ref{thm-main}, this result can be seen as a generalization of the 
well-known fact that every expansive dynamical system on a totally disconnected 
compact space is conjugate to a subshift \cite{Cnrt}. The proof of 
Theorem~\ref{cor-pseudgroupuniv} below uses the ideas of 
\cite[Propositions 2.6, 2.8]{Cnrt} in conjunction with the method of 
Proposition~\ref{prop-freegroup}.

\begin{proof}\emph{(of Theorem~\ref{cor-pseudgroupuniv}).}
  Let $\Gamma$ be a compactly generated pseudogroup on a compact metrizable 
  $0$-dimensional space $\Omega$ with the CGS $(Z,\Lambda_Z)$. By 
  Lemma~\ref{lemma:CGS<->Full} we can construct a CGS $(Y,\Lambda_Y)$ over a 
  clopen set $Y$ such that the domains of the elements of $\Lambda_Y$ are 
  compact, and we can use the same method to find a system of generators 
  $\Lambda_\Omega$ with the same properties on $\Omega$. We can also assume that 
  $\Lambda_\Omega$ is a symmetric set of generators, and denote by 
  $\Lambda_\Omega^+$ its `positive' part. 
  
  By assumption $\Gamma$ is $\epsilon$-expansive, and since $\Omega$ is 
  compact, 
  we can find a finite partition $\{B_s\}_{s \in S}$ of $\Omega$ by clopen 
  pairwise disjoint sets of diameter smaller than $\epsilon$. Let $m = \card S$.
  We now form the set $\mF$ of all formal finite words of elements 
  $\Lambda_\Omega$, that is, $\mF$ is a free group with 
  $M = \card\Lambda_\Omega^+$ generators. Then, given a word $g\in \mF$, there 
  is a pair $(\theta_g,\gamma_g)$, where $\gamma_g$ is a homeomorphism obtained 
  as the composition of generators in $g$, and $\theta_g$ is the clopen domain 
  of $\gamma_g$. Such compositions are not always defined, and we allow 
  $\theta_g$ to be an empty set. This is the main difference of the considered 
  situation from that in \cite[Propositions 2.6, 2.8]{Cnrt}. Let
  \[
    \widetilde{S} = S \cup \{s_\emptyset\},
  \]
  where $s_\emptyset$ would code the empty set, and let 
  $\Sigma(\mF, \widetilde{S}) = \{ \sigma : \mF \to \widetilde{S} \}$ be the 
  shift space. Define
  \[
    I(\sigma) = \bigcap_{g \in \mF, \theta_g \ne \emptyset} 
        \left\{ 
          \gamma_g^{-1}\left( B_{\sigma(g)} \cap \gamma_g(\theta_g) \right) 
        \right\},
  \]
  and let $\Psi \subset \Sigma(\mF, \widetilde{S})$ be the subset containing 
  those $\sigma$ for which $I(\sigma)$ is non-empty. Note that an arbitrarily 
  large number of elements in $\mF$ can be mapped by such a $\sigma$ to 
  $s_\emptyset$, which means that points in $I(\sigma)$ are not in the domain 
  of homeomorphisms corresponding to these elements. If $\theta_g = \emptyset$, 
  then for every $gh$, $h \in \mF$, we have $\theta_{gh} = \emptyset$, and so 
  $\sigma(gh) = s_\emptyset$.

  By a similar argument as for dynamical systems, $\epsilon$-expansivity of 
  $\Gamma$ and the choice of the covering $\{B_s\}_{s \in S}$ implies that each 
  $I(\sigma)$ is a single point. More precisely, let $x,y \in I(\sigma)$. Then 
  for any $g \in \mF$ with $\theta_g \ne \emptyset$ we have 
  $\gamma_g(x),\gamma_g(y) \in B_{\sigma(g)}$, which implies that 
  $d_\Omega(\gamma_g(x), \gamma_g(y)) < \epsilon$ for all $g \in \mF$, and so 
  $x = y$. It follows that the map
  \[
    \pi : \Psi \to \Omega: \sigma \mapsto I(\sigma)
  \]
  is well-defined and injective. However, $\Psi$ is not invariant under the 
  action of $\mF$. Indeed, it follows from definitions that if 
  $\sigma(g) \ne s_\emptyset$, then $I(\sigma \cdot g) = \gamma_g (I(\sigma))$ 
  and therefore $I(\sigma \cdot g) \in \Psi$. However, if 
  $\sigma(g) = s_\emptyset$, then there exists 
  $\sigma\cdot g\in \Sigma(\mF,\widetilde{S})$, while $\gamma_g(I(\sigma))$ is 
  not defined. We therefore define a \emph{partial action} of $\mF$ on $\Psi$ 
  by allowing $g \in \mF$ to act on $\sigma$ if and only if 
  $\sigma(g) \ne s_\emptyset$. Then the map $\pi$ is equivariant with respect 
  to this partial action of $\mF$ on $\Psi$ and the $\Gamma$-action on 
  $\Omega$. 
  
  We have to show that $\pi$ is continuous and surjective. Let 
  $\{\sigma_n\}\in\Psi$ be a sequence converging to $\sigma \in \Psi$. Then 
  for every $g \in \mF$ we can find an integer $n_g$ such that for all $n>n_g$ 
  we have $\sigma_n(g) = \sigma(g)$, that is, $\sigma_n(g)$ and $\sigma(g)$ are 
  in the same element of the partition. Since $\Omega$ is compact, by 
  extracting a subsequence, we can assume that $x_n = \pi(\sigma_n)$ converges 
  to an element $x \in \Omega$. If $\sigma(g) \ne s_\emptyset$, then for 
  $n > n_g$ we have $x_n\in \gamma_g^{-1}(B_{\sigma(g)})$, and 
  $x\in \gamma_g^{-1}(B_{\sigma(g)})$ since $\gamma_g^{-1}(B_{\sigma(g)})$ is 
  closed. If $\sigma(g) = s_\emptyset$, then for $n > n_g$ we have 
  $\sigma_n(g) = s_\emptyset$, that is, $x_n\notin\dom\gamma_g$. Since domains 
  of homeomorphisms in $\Gamma$ are open, their complements are closed and so 
  $x\notin\dom\gamma_g$. It follows that $x \in I(\sigma)$, and $\pi$ is 
  continuous. Surjectivity follows from the fact that the domains of 
  pseudogroup generators and their inverses form a clopen covering of $\Omega$. 
  Therefore $\pi$ is a homeomorphism.

  We now apply a procedure similar to the one in Theorem~\ref{thm-main} to 
  embed $\Psi$ into $X_n$ for a large enough $n$. The difference with 
  Theorem~\ref{thm-main} is that if $\sigma(g) = s_\emptyset$ we do not allow 
  $g$ to act on $\sigma$, and therefore we do not add any edge to the graph. As 
  before, we assume that a injective map 
  $\alpha:\Lambda_\Omega^+\times S\to F^{1+}_n$ has been chosen, for 
  $n\geq Mm$.

  We fix $\sigma\in\Psi$ and construct $T_\sigma$ as a subgraph of $\cF_n$ as 
  follows. We proceed by induction. Let $\{e\} = K_0 \subset T_\sigma$ be a 
  subgraph with a single vertex. As before, $\mF^j$ and $\mF^{\leq j}$ 
  represent the set of words of length $j$ and words of length less or equal to 
  $j$ in $\mF$. We also define
  \[
    \widetilde\mF^j = \bigl\{\, g\in\mF^j 
                                    \bigst \sigma(g)\neq s_\emptyset \,\bigr\},
  \]
  the set $\widetilde\mF^{\leq j}$ is defined in a similar way. We assume that 
  we are given the following data:
  \begin{enumerate}

    \item A connected subgraph $K_{j-1} \subset\cF_n$ containing $e$.

    \item A bijective map $\kappa_{j-1}:\widetilde\mF^{\leq j-1}\to V(K_{j-1})$ 
      on a subset $\widetilde\mF^{\leq j-1} \subset \mF^{\leq j-1}$, such that 
      if $g_k \in \widetilde\mF^{\leq j-1}$, $k=1,2$, and $g_2 = g_1 h$ for some 
      $h \in \Lambda_\Omega$, then 
      \[
        \kappa_{j-1}(g_2) = \kappa_{j-1}(g_1)\alpha(h,\sigma_1(e)),
      \]
      where $\sigma_k = \sigma\cdot g_k$.

  \end{enumerate}
  Now we construct the graph $K_j$ and the map 
  $\kappa_j:\widetilde\mF^{\leq j} \to V(K_j)$. For any 
  $g'\in \widetilde\mF^{j-1}$ set $\kappa_j(g') = \kappa_{j-1}(g')$. Since $\mF$ 
  is a free group, given $g_2\in\widetilde\mF^j$ there is a unique 
  $g_1\in\widetilde\mF^{j-1}$ such that $g_2 = g_1 \cdot h$, and either 
  $h\in\Lambda_\Omega^+$ or $h^{-1}\in\Lambda_\Omega^+$. In the first case set
  \[
    \kappa_j(\sigma,g_2)
          = \kappa_{j-1}(\sigma, g_1) \alpha(h, \sigma_1(e)),
  \]
  and otherwise set
  \[ 
    \kappa_j(\sigma,g_2)
        = \kappa_{j-1}(\sigma, g_1)
                \bigl[\alpha(h^{-1}, \sigma_2(e))\bigl]^{-1}.
  \]
  Let $K_j$ be a subgraph of $\cF_n$ with the set of vertices 
  $\kappa_j(\widetilde{\mF}^{\leq j})$. Define 
  $T_\sigma = \bigcup_{j=0}^\infty K_j$. The rest of the proof proceeds 
  similarly to that of Theorem~\ref{thm-main}.
\end{proof}

\begin{remark}
  The main difference in the geometry of graphs in the images of embeddings of 
  Bernoulli shifts, and embeddings of expansive compactly generated pseudogroups 
  is that in the former case vertices of the graphs have constant degree, that 
  is, the number of edges starting and ending at a vertex in $T_\sigma$ is the 
  same for every vertex. In the latter it may vary, with the highest degree 
  always $\card(\Lambda_\Omega^+)$.
\end{remark}

% S Universal space for expansive matchbox manifold dynamics (end)

\section{Bernoulli shifts on semigroups}\label{semigroup}

In this section we explore the case of Bernoulli shifts on semigroups. Our 
constructions can be applied to the Bernoulli shift on $\mN_0$, just because its 
dynamics is given by a pseudogroup of transformations generated by finitely many 
transformations of compact domain: if we fix $\Sigma=\Sigma(\mN_0,S)$, we have 
the compact cylinders $C_s = \{\, \sigma\in\Sigma \st \sigma(0)=s \,\}$ and the 
maps
\[
  1_s : C_s \to \Sigma : \sigma \mapsto \sigma \cdot 1,
\]
that is the action of $1\in\mN_0$ restricted to $C_s$, with $s\in S$. It is 
straightforward to show $\{C_s\}_{s\in S}$ is a partition of $\Sigma$ and the 
pseudogroup associated to the action of $\mN_0$ on $\Sigma$ is generated by the 
family $\{1_s\}_{s\in S}$. Therefore it is possible apply the 
Theorem~\ref{cor-pseudgroupuniv} and equivariantly embed $\Sigma$ in the 
space of pointed trees. %It is not difficult to adapt these arguments to finite 
%extensions of $\mN_0$, that is, to the `positive' part of finite extensions of 
%$\mZ$. 
But it is not always possible to extend those arguments to other semigroups. 

The following argument gives a family of semigroups which cannot be 
equivariantly embedded in any foliated space, as their dynamics is not given by a 
pseudogroup of local homeomorphisms. This class of semigroups contains, for instance $\mN_0\times\mN_0$.

\begin{lemma}
  Let $G$ be a semigroup generated by a finite family of 
  elements $G^1$, and fix a finite set of symbols $S$ of cardinality greater 
  than $1$. Then if there exists $a$ and $b\in G$ such that 
  $\langle a\rangle\cong\mN_0$ and $a^k\neq bw$ for any $w\in G$ and 
  $k\in\mN_0$, then the action of $G$ on $\Sigma(G,S)$ cannot be conjugate to 
  the pseudogroup of local homeomorphisms.
\end{lemma}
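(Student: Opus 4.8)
The plan is to show that the self-map $\beta:\Sigma\to\Sigma$ of $\Sigma=\Sigma(G,S)$ given by $\beta(\sigma)=\sigma\cdot b$ fails to be a local homeomorphism, and then to observe that this obstruction is inherited by the generators. Being a local homeomorphism is preserved under composition and under conjugation by a homeomorphism, so if each generator in $G^1$ acted (after conjugation) by a local homeomorphism, then so would every product of generators. Writing $b$ as a word $g_1 g_2\cdots g_m$ in $G^1$, the right action gives $\sigma\cdot b = (\cdots((\sigma\cdot g_1)\cdot g_2)\cdots)\cdot g_m$, so $\beta$ is exactly the composition of the generator maps; hence it would be a local homeomorphism too. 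It therefore suffices to produce a single element whose action is not a local homeomorphism, and I take that element to be $b$.

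First I would record how $\beta$ depends on $\sigma$. By the definition of the action, $\beta(\sigma)(w)=(\sigma\cdot b)(w)=\sigma(bw)$ for every $w\in G$, so $\beta(\sigma)$ is determined entirely by the restriction of $\sigma$ to the right-translate $bG=\{\,bw\st w\in G\,\}$. Consequently, if $\sigma$ and $\sigma'$ agree on $bG$, then $\beta(\sigma)=\beta(\sigma')$.

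The key step is to show that $\beta$ is not locally injective at any point, which already prevents it from being a local homeomorphism. Fix $\sigma\in\Sigma$ and a basic neighbourhood $C_{F,\sigma}$ with $F\subset G$ finite. Because $\langle a\rangle\cong\mN_0$, the powers $a^0,a^1,a^2,\dots$ are pairwise distinct, so the set $\{\,a^k\st k\in\mN_0\,\}$ is infinite and only finitely many of its elements lie in $F$; choose $k_0$ with $a^{k_0}\notin F$. By hypothesis $a^{k_0}\neq bw$ for every $w\in G$, that is, $a^{k_0}\notin bG$. Since $\card S>1$, I may define $\sigma'\in\Sigma$ to agree with $\sigma$ at every element of $G$ except $a^{k_0}$, where I set $\sigma'(a^{k_0})$ to be any symbol different from $\sigma(a^{k_0})$. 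Then $\sigma'\neq\sigma$; yet $\sigma'$ agrees with $\sigma$ on $F$, so $\sigma'\in C_{F,\sigma}$, and $\sigma'$ agrees with $\sigma$ on $bG$, so $\beta(\sigma')=\beta(\sigma)$. Thus every neighbourhood of $\sigma$ contains a distinct point with the same image under $\beta$.

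It follows that $\beta$ is not locally injective and hence is not a local homeomorphism. As noted above, local injectivity of all generators would force local injectivity of $\beta$, and conjugating by a homeomorphism preserves this property; therefore the action of $G$ on $\Sigma$ cannot be conjugate to a pseudogroup of local homeomorphisms. The only genuinely delicate point is the reduction set up at the start, namely phrasing the conjugacy-invariance and closure under composition so that the failure for the single element $b$ rules out the whole action; the non-injectivity construction itself is elementary once one observes that the values of $\sigma$ on the powers of $a$ are never seen by $\beta$.
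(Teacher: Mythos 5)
Your proof is correct and rests on exactly the same observation as the paper's: since $a^k \notin bG$ for every $k$ and the powers of $a$ are pairwise distinct, the values of $\sigma$ on the infinite set $\{\,a^k\,\}$ are invisible to the map $\sigma \mapsto \sigma\cdot b$, which is therefore nowhere locally injective. The paper packages this as an uncountable fibre $b^{-1}(\sigma)$ (parametrised by $\Sigma(\langle a\rangle,S)$) contrasted with the countable vertex set of any $T_\sigma$, whereas you go directly to the failure of local injectivity and spell out the reduction (left implicit in the paper) that the action of $b$ is a composition of the generators' actions; both versions are sound.
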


\begin{proof}
  Suppose there exist $a, b\in G$ as in the hypothesis. Fix an element $\sigma 
  \in\Sigma(G,S)$ and a symbol $s_0\in S$. Recall that the set 
  $\Sigma(\langle a\rangle,S)$ contains an uncountable number of maps. Now for 
  each $\varsigma\in\Sigma(\langle a\rangle,S)$ we will construct 
  $\sigma_\varsigma$ such that $\sigma_\varsigma\cdot b = \sigma$. For that 
  write
  \[
    \sigma_\varsigma(w) = \begin{cases}
      \varsigma(w), & \text{if $w = a^k$ for some $k\in\mN$,} \\
      \sigma(w^\flat), & \text{if $w = b w^\flat $ for some $w^\flat\in G$,} \\
      s_0, & \text{in other case.}
    \end{cases}
  \]
  As $a^k\neq b w$ for any $w\in G$ and $k\in\mN_0$, the map 
  $\sigma_\varsigma:G\to S$ is well defined. Now, by definition
  \[
    \sigma_\varsigma(w)\cdot b = \sigma_\varsigma(bw) = \sigma(w).
  \]
  Clearly if $\varsigma\neq\varsigma'$, then 
  $\sigma_\varsigma\neq\sigma_{\varsigma'}$, i.e. all $\sigma_{\varsigma}$ are 
  distinct. Therefore, there are uncountably many elements of $\Sigma(G,S)$ 
  which are mapped to $\sigma$ by $b\in G$. Since every $T_\sigma$ contains a 
  countable number of vertices, the dynamics on the shift space $\Sigma(G,S)$ 
  cannot be conjugate to the dynamics of $(X_n,\fG_n)$. The same holds for any 
  open set in $\Sigma(G,S)$, as fixing the images of finitely many $a^k$ does 
  not break uncountability of $b^{-1}(\sigma)$. Thus the map associated to the 
  action of $b$ is not locally injective and cannot belong (even locally) to a 
  pseudogroup of local homeomorphisms.
\end{proof}

\end{document}